\documentclass[10pt]{book}
\usepackage[sectionbib]{natbib}
\usepackage{array,epsfig,fancyheadings,rotating}
\usepackage{color,soul}

%\usepackage[dvipdfm]{hyperref}
%%%%%%%%%%%%%%%%%%%%%%%%%%%%%%%%%%%%%%%%%%%%%%%%%%%%%%%%%%%%%%%%%%%%%%%%%%%%%%%%%%%%%%%%%%%%%%%%%%%%%%%%%%%%%%%%%%%%%%%%%%%%

\textwidth=31.9pc
\textheight=46.5pc
\oddsidemargin=1pc
\evensidemargin=1pc
\headsep=15pt
\topmargin=.6cm
\parindent=1.7pc
\parskip=0pt

\usepackage{amsmath}
\usepackage{amssymb}
\usepackage{amsfonts}
\usepackage{multirow}
\usepackage{amsthm}

\setcounter{page}{1}

\newtheorem{corollary}{Corollary}
\newtheorem{proposition}{Proposition}
\theoremstyle{definition}
\newtheorem{definition}{Definition}

\newtheorem{remark}{Remark}
\pagestyle{fancy}
\newcommand{\field}[1]{\mathbb{#1}}

\newcommand{\Z}{\field{Z}}

\newcommand{\design}{{\mathcal D}}
\newcommand{\fraction}{{\mathcal F}}

\DeclareMathOperator*{\xgcd}{gcd}
\DeclareMathOperator*{\xlcm}{lcm}

%%%%%%%%%%%%%%%%%%%%%%%%%%%%%%%%%%%%%%%%%%%%%%%%%%%%%%%%%%%%%%%%%%%%%%%%%%%%%%%%%%%%%%%%%%%%%%%%%%%%%%%%%%%%%%%%%%%%%%%%%%%%
\pagestyle{fancy}

\lhead[\fancyplain{} \leftmark]{}
\chead[]{}
\rhead[]{\fancyplain{}\rightmark}
\cfoot{}
%\headrulewidth=1pt

%%%%%%%%%%%%%%%%%%%%%%%%%%%%%%%%%%%%%%%%%%%%%%%%%%%%%%%%%%%%%%%%%%%%%%%%%%%%%%%%%%%%%%%%%%%%%%%%%%%%%%%%%%%%%%%%%%%%%%%%%%%%
%%%%%%%%%%%%%%%%%%%%%%%%%%%%%%%%%%%%%%%%%%%%%%%%%%%%%%%%%%%%%%%%%%%%%%%%%%%%%%%%%%%%%%%%%%%%%%%%%%%%%%%%%%%%%%%%%%%%%%%%%%%%

\begin{document}

\def\thefigure{\arabic{figure}}
\def\thetable{\arabic{table}}

\fontsize{10.95}{14pt plus.8pt minus .6pt}\selectfont

\setcounter{chapter}{1}
\setcounter{equation}{0} %-1
%\noindent {\bf 1. The First Section}

%%%%%%%%%%%%%%%%%%%%%%%%%%%%%%%%%%%%%%%%%%%%%%%%%%%%%%%%%%%%%%%%%%%%%%%%%%%%%%%%%%%%%%%%%%%%%%%%%%%%%%%%%%%%%%%%%%%%%%%%%%%%
%%%%%%%%%%%%%%%%%%%%%%%%%%%%%%%%%%%%%%%%%%%%%%%%%%%%%%%%%%%%%%%%%%%%%%%%%%%%%%%%%%%%%%%%%%%%%%%%%%%%%%%%%%%%%%%%%%%%%%%%%%%%

\renewcommand{\baselinestretch}{1.2}

\markright{ \hbox{\footnotesize\rm 
%{\footnotesize\bf 24} (201?), 000-000
}\hfill\\[-13pt]
\hbox{\footnotesize\rm
%\href{http://dx.doi.org/10.5705/ss.20??.???}{doi:http://dx.doi.org/10.5705/ss.20??.???}
}\hfill }

\markboth{\hfill{\footnotesize\rm Roberto Fontana, Fabio Rapallo and Maria Piera Rogantin} \hfill}
{\hfill {\footnotesize\rm Aberrations in multilevel designs} \hfill}

\renewcommand{\thefootnote}{}
$\ $\par

%%%%%%%%%%%%%%%%%%%%%%%%%%%%%%%%%%%%%%%%%%%%%%%%%%%%%%%%%%%%%%%%%%%%%%%%%%%%%%%%%%%%%%%%%%%%%%%%%%%%%%%%%%%%%%%%%%%%%%%%%%%%

\fontsize{10.95}{14pt plus.8pt minus .6pt}\selectfont
\vspace{0.8pc}
\centerline{\large\bf ABERRATION IN QUALITATIVE MULTILEVEL DESIGNS}
\vspace{.4cm}
\centerline{Roberto Fontana$^{1}$, Fabio Rapallo$^{2}$ and Maria Piera Rogantin$^{3}$}
\vspace{.4cm}
\centerline{\it $^{1}$Politecnico di Torino, Italy, \ $^{2}$Universit\`a del Piemonte Orientale, Italy, }
\centerline{\it and $^{3}$Universit\`a di Genova, Italy}
\vspace{.55cm}
\fontsize{9}{11.5pt plus.8pt minus .6pt}\selectfont

%%%%%%%%%%%%%%%%%%%%%%%%%%%%%%%%%%%%%%%%%%%%%%%%%%%%%%%%%%%%%%%%%%%%%%%%%%%%%%%%%%%%%%%%%%%%%%%%%%%%%%%%%%%%%%%%%%%%%%%%%%%%

\begin{quotation}
\noindent {\it Abstract:} Generalized Word Length Pattern (GWLP) is an important and widely-used tool for comparing fractional factorial designs. We consider qualitative factors, and we code their levels using the roots of the unity. We write the GWLP of a fraction ${\mathcal F}$ using the polynomial indicator function, whose coefficients encode many properties of the fraction. We show that the coefficient of a simple or interaction term can be written using the counts of its levels. This apparently simple remark leads to major consequence, including a convolution formula for the counts. We also show that the \emph{mean aberration} of a term over the permutation of its levels provides a connection with the variance of the level counts. Moreover, using \emph{mean aberrations} for symmetric $s^m$ designs with $s$ prime, we derive a new formula for computing the GWLP of $\fraction$. It is computationally easy, does not use complex numbers and also provides a clear way to interpret the GWLP. As case studies, we consider non-isomorphic orthogonal arrays that have the same GWLP. The different distributions of the \emph{mean aberrations} suggest that they could be used as a further tool to discriminate between fractions.\par

\vspace{9pt}
\noindent {\it Key words and phrases:} Algebraic Statistics, complex coding, counts of factor levels, fractional factorial designs, generalized word-length pattern, indicator function
\par
\end{quotation}\par

\section{Introduction}
In design of experiments, Generalized Word-Length Pattern (GWLP)  is an important tool for comparing fractional factorial designs. For a regular fraction ${\mathcal F}$ of a full-factorial design ${\mathcal D}$ with $m$ factors, the  Word Length Pattern (WLP) of ${\mathcal F}$ has been introduced by \cite{suen|chen|wu:97} as the sequence $A({\mathcal F})=(A_1({\mathcal F)}, A_2({\mathcal F}), \ldots, A_m({\mathcal F})$), where $A_j$ is the number of defining words with length $j$. Such a measure of the degree of aliasing can be easily interpreted in the regular case. The GWLP has been generalized for non-regular asymmetrical designs by \cite{xu|wu:01}, but it has a less evident meaning than in the regular case.

The aberration and the GWLP through the polynomial indicator function of the fraction $\fraction$ have been introduced in \cite{li|lin|ye:03} and \cite{cheng|ye:04} for two- and three-level cases respectively. In those papers the aberration of a simple or interaction term is defined as the square of the module of the corresponding coefficient of the indicator function, and the $j$-th element $A_j({\mathcal F})$ of the GWLP is the sum of the aberrations of the terms of order $j$, $j=1,\dots.m$.

As demonstrated in \cite{xu|wu:01}, the GWLP does not depend on the choice of a particular orthonormal basis of the functions defined over ${\mathcal D}$, while the aberration does. \cite{pistone|rogantin:08} use the complex coding of the factor levels to express the basis of the functions, and in particular of the indicator function. With this coding the coefficients of the indicator function are related in a simple manner to many interesting properties of the fraction and allows us to define aberration and GWLP in a clear way. The complex coding is particularly useful in the case of qualitative factors, as assumed in this work.
To simplify the computation, avoiding the use of complex numbers, \cite{fontana|pistone:13} represent the coefficients using the counts of the levels appearing in each simple or interaction term. As general references for GWLP and its properties, the reader can refer to \cite{mukerjee|wu:06} and \cite{chen|cheng:12}.

The practical use of the GWLP to discriminate among different designs is well known. Given two designs ${\mathcal F}_1$ and ${\mathcal F}_2$, the Generalized Minimum Aberration (GMA) criterion consists in the sequential minimization of the GWLP. ${\mathcal F}_1$ is better than ${\mathcal F}_2$ if there exists $j$ such that $A_1({\mathcal F}_1) = A_1({\mathcal F}_2), \ldots , A_j({\mathcal F}_1) = A_j({\mathcal F}_2)$ and $A_{j+1}({\mathcal F}_1) < A_{j+1}({\mathcal
F}_2)$. Despite the fact that the GMA criterion is widely applied, the statistical meaning of the elements of the GWLP is somewhat unclear. In the original work of \cite{xu|wu:01}, the GWLP of a symmetrical design is written as the MacWilliams transform of the distance distribution. This result has been generalized in \cite{qin|ai:07} to the case of multilevel designs. Under a different point of view, \cite{gromping|xu:14} writes the first non-zero element of the
GWLP as the sum of the $R^2$ coefficients of suitably defined linear models.  The connection between GWLP and Discrete Discrepancy has been investigated in \cite{qin|fang:04}.

In this work we use the expression of the GWLP via the aberrations of the interaction terms of a given order. In turn, the aberrations are computed using only the levels counts of the corresponding terms. We fully exploit such new expressions in two directions. First,  we establish a convolution formula for the counts of the terms, in symmetrical $s^m$ designs with $s$ a prime number. Second, we introduce the {\em mean aberration} of a simple or interaction term, over the permutations of its levels. The mean aberration has a very simple expression, and it is easy to compute and to explain. Indeed, we show that the mean aberration is proportional to the variance of the level counts.

Moreover, we prove that for symmetrical $s^m$ designs, $s$ prime, the $j$-th element $A_j({\mathcal F})$ of GWLP is the sum of the mean aberrations of the terms of order $j$, $j=1,\dots,m$ and therefore the mean aberrations produce an alternative decomposition of the GWLP. In our knowledge, the proposed formula is the simplest over all alternative expressions in literature. Nevertheless, in general, this property does not hold.

The paper is organized as follows. In Section \ref{sec:alg} a short review of the algebraic theory of factorial designs is given. In Section \ref{sec:conv} the convolution formula that expresses the relationships among the level counts is obtained, while in Section \ref{sec:GWLP} the mean aberration is defined, and its properties and its connection with the GWLP are studied. Section \ref{sec:choice} is devoted to the comparison of fractions that have the same GWLP but different distributions of the mean aberrations. Finally in Section \ref{sec:future} we briefly describe some directions for future work.

\section{Algebraic characterization of fractional designs} \label{sec:alg}

In this section, for ease in reference, we present some relevant results of the algebraic theory of fractional designs. The interested reader can find further information, including the proofs of the propositions, in \cite{fontana|pistone|rogantin:00} and \cite{pistone|rogantin:08}.

Let us consider an experiment which includes $m$ factors.

Let us code the $s_j$ levels of the $j$-th factor  by  the $s_j$-th roots of the unity  $\omega_k^{(s_j)}=\exp\left(\sqrt{-1}\:  \frac {2\pi}{s_j} \ k\right)$, $k=0,\ldots,s_j-1, \ j=1,\ldots,m$. We denote such a factor by $\Omega_{s_j}$, $\Omega_{s_j}=\left\{\omega_0,\ldots,\omega_{{s_j}-1}\right\}$.

As $\alpha=\beta \mod s$ implies $\omega_k^\alpha = \omega_k^\beta$, it is useful to introduce the residue class ring $\mathbb Z_s$ and the notation $[k]_s$ for the residue of $k \mod s$. For integer $\alpha$, we obtain $(\omega_k)^\alpha = \omega_{[\alpha k]_s}$. We also have $\omega_h\omega_k=\omega_{[h+k]_s}$. We drop the sub-$s$ notation when there is no ambiguity.

We denote by   $\design$  the full factorial design with complex coding:
\begin{equation*}
\design = \design_1 \times \cdots \design_j \cdots \times \design_m \ \textrm{ with } \design_j=\Omega_{s_j} \ .
\end{equation*}
the cardinality of the full factorial design is $\# \design=\prod_{j=1}^m s_j$.

We denote by $L$ the exponent set of the complex coded design $\{0,\ldots,s_j-1\}$, $j=1,\dots,m$:
\begin{equation*}
L  = \mathbb Z_{s_1} \times \cdots  \times \mathbb Z_{s_m} \ .
\end{equation*}
Notice that $L$ is both  the  exponent set of the complex coded design and the integer coded design. The elements of $L$ are denoted by $\alpha$, $\beta, \ldots$:
\begin{equation*}
L = \left\{ \alpha= (\alpha_1,\ldots,\alpha_m) : \alpha_j = 0,\ldots,s_j-1, j=1,\ldots,m\right\} \ ;
\end{equation*}
$[\alpha-\beta]$ is the $m$-tuple $\left(\left[\alpha_1-\beta_1 \right]_{s_1}, \ldots, \left[\alpha_j-\beta_j \right]_{s_j}, \ldots, \left[\alpha_m - \beta_m\right]_{s_m} \right)$. The computation of the $j$-th element is in the ring $\mathbb Z_{s_j}$.

In order to use polynomials to represent all the functions defined over $\design$, including counting functions, we define
\begin{itemize}
\item $X_j$, the $j$-th component function, which maps a point $\zeta=(\zeta_1,\ldots,\zeta_m)$ of $\design$ to its $j$-th component,
\begin{equation*}
X_j \colon \design \ni (\zeta_1,\ldots,\zeta_m)\ \longmapsto \ \zeta_j \in \design_j \ .
\end{equation*}
The function $X_j$ is a \emph{simple term} or, by abuse of terminology, a \emph{factor}.
\item $X^\alpha=X_1^{\alpha_1} \cdot \ldots \cdot X_m^{\alpha_m}$, $\alpha \in L =  \Z_{s_1} \times \cdots \times  \Z_{s_m}$ i.e., the monomial function
\begin{equation*}
X^\alpha : \design \ni (\zeta_1,\ldots,\zeta_m)\ \mapsto \ \zeta_1^{\alpha_1}\cdot \ldots \cdot \zeta_m^{\alpha_m} \ .
\end{equation*}
The function $X^\alpha$ is an \emph{interaction term}.
\end{itemize}

The following proposition provides the level set of a term $X^\alpha$, for any choice $s_1, \ldots, s_m$ of the levels of the factors.

\begin{proposition} \label{pr:P-1-2}
On the full factorial design $\design$, the simple term $X_j^{\alpha_j}$ takes values in $\Omega_{t_j}$, where $t_j=s_j/\xgcd(\alpha_j,s_j)$. The interaction term $X^\alpha$ takes values in $\Omega_{t_\alpha}$ where ${t_\alpha}=\xlcm(t_1,\ldots,t_m)$ and $t_i$ ($i=1,\ldots,m$) are determined as before.
\end{proposition}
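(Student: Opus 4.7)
The plan is to treat the two assertions in succession, using only the arithmetic of roots of unity already recalled in the excerpt. For the simple term, I fix $j$ and set $d = \xgcd(\alpha_j, s_j)$, writing $\alpha_j = d\,a$ and $s_j = d\,t_j$ with $\xgcd(a, t_j) = 1$. The identity $(\omega_k)^{\alpha_j} = \omega_{[\alpha_j k]_{s_j}}$ noted above gives
\[
X_j^{\alpha_j}(\omega_k^{(s_j)}) \;=\; \exp\!\left(2\pi\ic\,\frac{\alpha_j\,k}{s_j}\right) \;=\; \exp\!\left(2\pi\ic\,\frac{a\,k}{t_j}\right).
\]
Since $a$ is a unit in $\Z_{t_j}$, the map $k \mapsto [ak]_{t_j}$ is surjective onto $\Z_{t_j}$, so as $k$ ranges over $\{0,\ldots,s_j-1\}$ the right-hand side takes each value of $\Omega_{t_j}$ (each with multiplicity $d$), which is the first claim.

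For the interaction term, the first part implies that as $\zeta$ varies over $\design$ the factors $\zeta_j^{\alpha_j}$ vary independently and surjectively over the cyclic subgroups $\Omega_{t_j}$. Hence the value set of $X^\alpha$ equals the setwise product $\Omega_{t_1}\cdot\Omega_{t_2}\cdots\Omega_{t_m}$ inside the multiplicative group of the unit circle. Because that group is abelian, the setwise product coincides with the subgroup generated by $\Omega_{t_1}\cup\cdots\cup\Omega_{t_m}$; being finite, this subgroup is cyclic, say $\Omega_N$. Each inclusion $\Omega_{t_j}\subseteq\Omega_N$ gives $t_j\mid N$, so $\xlcm(t_1,\ldots,t_m)\mid N$; conversely $\Omega_{\xlcm(t_1,\ldots,t_m)}$ already contains every $\Omega_{t_j}$ and hence contains their product, forcing $N\mid \xlcm(t_1,\ldots,t_m)$. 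Therefore $N = t_\alpha$.

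I do not foresee a serious obstacle: both parts reduce to short group-theoretic observations once the complex coding is unpacked. The only step that calls for a line of care is ruling out that the setwise product is a proper sub- or overgroup of $\Omega_{t_\alpha}$, and the double divisibility argument above dispatches this cleanly.
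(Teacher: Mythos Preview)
Your argument is correct and follows precisely the route the paper indicates: the paper's own proof consists of the single sentence ``It follows from the properties of the arithmetic in ${\mathbb Z}_s$,'' and your proposal is a careful unpacking of exactly those properties. Your treatment of the interaction term via the setwise product of cyclic subgroups and the double-divisibility argument for $N=t_\alpha$ is a clean way to make explicit what the paper leaves implicit.
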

\begin{proof}
It follows from the properties of the arithmetic in ${\mathbb Z}_s$.
\end{proof}

We observe that for a symmetric $s^m$ design with $s$ prime number, $t=s$ for all $\alpha \in L$.

The set of monomials $\{X^\alpha: \alpha \in L\}$ is an orthonormal basis of all the complex functions defined over $\design$.

\begin{definition}
A fraction $\fraction$ is a multiset $(\fraction_*,R)$ whose underlying set of elements $\fraction_*$ is contained in $\design$ and $R$ is the multiplicity function $R: \fraction_* \rightarrow \mathbb N$ that for each element in $\fraction_*$ gives the number of times it belongs to the multiset $\fraction$. We call $R$  \emph{counting function}.
\end{definition}
The underlying set of elements $\fraction_*$, referred to as the support of $\fraction_*$, is the subset of $\design$ that contains all the elements of $\design$ that appear in $\fraction$ at least once. We denote the number of elements of a fraction $\fraction$ by $n$, with $n = \sum_{\zeta \in \fraction_*} R(\zeta)$.

We use the basis $\{X^\alpha: \alpha \in L\}$ to represent the counting function of a fraction.

\begin{definition} \label{de:indicator}
The \emph{counting function} $R$ of a fraction $\fraction$ is a complex polynomial defined over $\design$ so that for each $\zeta \in \design$, $R(\zeta)$ equals the number of appearances of $\zeta$ in the fraction. A $0-1$ valued counting function is called \emph{indicator function} of a single replicate fraction $\fraction$.
We denote by $c_\alpha$ the coefficients of the representation of $R$  on $\design$ using the monomial basis
$\{X^\alpha, \ \alpha \in L\}$:
\begin{equation*}
R(\zeta) = \sum_{\alpha \in L} c_\alpha X^\alpha(\zeta), \;\zeta\in\design, \;  c_\alpha \in \mathbb C \ .
\end{equation*}
\end{definition}

\begin{proposition} \label{pr:ceqn}
Let $\fraction$ be a fraction with  counting function $R$.
\begin{enumerate}
\item
The coefficients $c_\alpha$ of  $R$  are given by:
\begin{equation} \label{calpha}
c_\alpha = \frac 1 {\#\design} \sum_{\zeta \in \fraction} X^{[-\alpha]}(\zeta) =
 \frac{1}{\#\design} \sum_{h=0}^{t_{\alpha}-1} n_{\alpha,t_{\alpha}-h} \ \omega_{h}
\end{equation}
where $t_{\alpha}$ is determined by $X^{\alpha}$ according to Proposition \ref{pr:P-1-2} and $n_{\alpha,h}$ is the number of the occurrences of  $\omega_h$ in $\{X^{\alpha}(\zeta):\zeta \in \fraction\}$.

In particular $c_0= {n}/{\#\design}$.
\item
If $\fraction$ is a single replicate fraction, the coefficients $c_\alpha$ satisfy the following relationships:
\begin{equation}\label{conv-c}
c_\alpha= \sum_{\beta\in L}  c_{\beta}\ c_{[\alpha-\beta]}
\end{equation}
for each $\alpha \in L$.
\end{enumerate}
\end{proposition}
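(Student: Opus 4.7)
For part (1), the plan is to exploit that $\{X^\alpha : \alpha \in L\}$ is an orthonormal basis of the complex functions on $\design$ under the inner product $\langle f, g \rangle = \frac{1}{\#\design}\sum_{\zeta \in \design} f(\zeta)\overline{g(\zeta)}$. I would first observe that since each $X^\alpha$ takes values in roots of unity, $\overline{X^\alpha(\zeta)} = X^{[-\alpha]}(\zeta)$ (this uses $\omega_k^{-1} = \omega_{[-k]}$ and the ring structure recalled in the excerpt). Then the standard Fourier coefficient formula gives $c_\alpha = \frac{1}{\#\design}\sum_{\zeta \in \design} R(\zeta)\, X^{[-\alpha]}(\zeta)$, and substituting the definition of $R$ as the multiplicity function collapses the sum over $\design$ into the sum over $\fraction$ (with multiplicity), yielding the first equality.

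To get the second equality I would partition the fraction according to the value of $X^\alpha(\zeta)$: by Proposition~\ref{pr:P-1-2}, $X^\alpha$ takes values in $\Omega_{t_\alpha}$, so grouping gives $\sum_{\zeta \in \fraction} X^{[-\alpha]}(\zeta) = \sum_{h=0}^{t_\alpha-1} n_{\alpha,h}\, \omega_{[-h]_{t_\alpha}}$. Since $\omega_{[-h]_{t_\alpha}} = \omega_{t_\alpha - h}$ (interpreting $\omega_{t_\alpha} = \omega_0$), a reindexing $h \mapsto t_\alpha - h$ produces the stated form. The identity $c_0 = n/\#\design$ is immediate from the first equality with $\alpha = 0$ since $X^{0} \equiv 1$ and $\sum_{\zeta \in \fraction} 1 = n$.

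For part (2), the key observation is that a single-replicate fraction has a $0$--$1$ valued counting function, so $R^2 = R$ pointwise on $\design$. Expanding $R = \sum_{\alpha \in L} c_\alpha X^\alpha$ and using $X^\alpha \cdot X^\beta = X^{[\alpha+\beta]}$ (which follows from $\omega_h \omega_k = \omega_{[h+k]}$ componentwise), I would write
\begin{equation*}
R^2 = \sum_{\beta,\gamma \in L} c_\beta c_\gamma X^{[\beta+\gamma]} = \sum_{\alpha \in L}\Bigl(\sum_{\beta \in L} c_\beta c_{[\alpha-\beta]}\Bigr) X^\alpha .
\end{equation*}
Uniqueness of coefficients in the orthonormal basis $\{X^\alpha\}$ then forces $c_\alpha = \sum_{\beta} c_\beta c_{[\alpha-\beta]}$.

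The only mildly subtle step is the sign/index manipulation turning $X^{[-\alpha]}$-values into the stated formula in terms of $n_{\alpha, t_\alpha - h}$; everything else is a direct application of orthonormality of characters of the abelian group $\prod_j \Z_{s_j}$ and of the fact that idempotent counting functions correspond exactly to subsets, which makes the convolution identity a pure consequence of $R^2 = R$.
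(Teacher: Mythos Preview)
Your proof is correct and follows exactly the underlying argument the paper invokes: orthonormality of the characters $\{X^\alpha\}$ together with $\overline{X^\alpha}=X^{[-\alpha]}$ for part~(1), and the idempotence $R^2=R$ of a $0$--$1$ counting function plus $X^\beta X^\gamma=X^{[\beta+\gamma]}$ for part~(2). The paper's own proof is essentially a chain of citations (to \cite{pistone|rogantin:08} and \cite{fontana|pistone:13}) for each equality, with the final reindexing attributed to ``properties of the roots of the unity''; your version simply unpacks those references and is otherwise the same argument.
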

\begin{proof}
\begin{enumerate}
\item
We have
\begin{equation*}
c_\alpha =\frac 1 {\#\design} \sum_{\zeta \in \fraction} X^{[-\alpha]}(\zeta) =
\frac{1}{\#\design} \sum_{h=0}^{t_{\alpha}-1} n_{[-\alpha],h} \ \omega_{h}=\frac{1}{\#\design} \sum_{h=0}^{t_{\alpha}-1} n_{\alpha,t_\alpha-h} \ \omega_{h}
\end{equation*}
where the first equality is proved in \cite{pistone|rogantin:08}, the second equality is proved in \cite{fontana|pistone:13}, and the last one derives from properties of the roots of the unity.
\item See \cite{pistone|rogantin:08}.
\end{enumerate}
\end{proof}

The coefficients $c_\alpha$ encode many interesting properties of the fraction $\fraction$ as orthogonality among factors and interactions, regularity, and aberration, see \cite{pistone|rogantin:08}.

\section{Convolution formula with counts}\label{sec:conv}

It is well known that the simple and interaction terms are dependent each other. The proposition below gives such relationships in term of the level counts of all terms on a fraction $\fraction$.

\begin{proposition} \label{pr:convol}
Let $\fraction$ be a single replicate fraction of a symmetric $s^m$ designs $\design$, with $s$  prime number. The counts $n_\alpha$, $n_\alpha=(n_{\alpha,0},\dots,n_{\alpha,s-1})$, are related according to
\begin{equation}\label{eq:conv}
\sum_{k=0}^{s-1} \left( \#\design \ n_{\alpha,k} - \sum_{\beta\in L} \sum_{i=0}^{s-1} n_{\beta,i}\ n_{[\alpha-\beta],[k-i]}\right)\omega_k=0
\end{equation}
for each $\alpha \in L$. For convenience we define $n_{0,k}=0$ for $k=1,\ldots,s-1$
\end{proposition}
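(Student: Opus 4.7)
The plan is to obtain \eqref{eq:conv} by substituting the count representation \eqref{calpha} of each Fourier coefficient into the quadratic identity \eqref{conv-c}, expanding a Cauchy-type product, and matching the result as a polynomial in the primitive $s$-th root of unity $\omega_1$.

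First, I would rewrite \eqref{calpha}. Because $s$ is prime we have $t_\alpha=s$ for every $\alpha\in L$, and with the convention $n_{0,0}=n$ and $n_{0,k}=0$ for $k\neq 0$ the formula \eqref{calpha} specialises to $c_0=n/\#\design$ as required. The substitution $h\mapsto [-h]$ in \eqref{calpha} puts the representation in the more convenient form
\begin{equation*}
c_\alpha=\frac{1}{\#\design}\sum_{k=0}^{s-1} n_{\alpha,k}\,\omega_{[-k]},
\end{equation*}
which directly pairs $\omega_{[-k]}$ with the count $n_{\alpha,k}$ of the level $\omega_k$ and follows also from $c_\alpha=\frac{1}{\#\design}\sum_{\zeta\in\fraction}X^{[-\alpha]}(\zeta)$ together with $X^{[-\alpha]}(\zeta)=\omega_{[-k]}$ whenever $X^{\alpha}(\zeta)=\omega_k$.

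Next I would compute the product $c_\beta c_{[\alpha-\beta]}$ using $\omega_i\omega_j=\omega_{[i+j]}$ and collect coefficients of each $\omega_{[-k]}$ via the substitution $j=[k-i]$:
\begin{equation*}
c_\beta c_{[\alpha-\beta]}=\frac{1}{(\#\design)^2}\sum_{k=0}^{s-1}\omega_{[-k]}\sum_{i=0}^{s-1} n_{\beta,i}\,n_{[\alpha-\beta],[k-i]}.
\end{equation*}
Summing over $\beta\in L$, invoking \eqref{conv-c}, and multiplying by $\#\design$ then yields
\begin{equation*}
\sum_{k=0}^{s-1}\omega_{[-k]}\Bigl(\#\design\cdot n_{\alpha,k}-\sum_{\beta\in L}\sum_{i=0}^{s-1} n_{\beta,i}\,n_{[\alpha-\beta],[k-i]}\Bigr)=0,
\end{equation*}
which differs from \eqref{eq:conv} only in that $\omega_{[-k]}$ appears in place of $\omega_k$. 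Since the bracketed quantity is an integer and $\overline{\omega_{[-k]}}=\omega_k$, taking complex conjugates of the whole equation produces precisely the stated identity. (Alternatively, one could relabel $k\mapsto[-k]$ and in the same stroke relabel the inner summation index $i\mapsto[-i]$ together with a matching reflection on $\beta$.)

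The main obstacle I anticipate is keeping the modular arithmetic bookkeeping honest: the $[-\alpha]$ baked into the integral representation of $c_\alpha$ must be propagated consistently through the Cauchy product, and the final reindexing from $\omega_{[-k]}$ to $\omega_k$ has to be justified (either via conjugation, exploiting that the coefficient is real, or via a bijection argument on $\mathbb{Z}_s$). A secondary point to check is that the $\beta=0$ and $\beta=\alpha$ terms in the double sum behave correctly, but the declared convention on $n_{0,k}$ makes those terms collapse to $\#\design\,n_{\alpha,k}$ contributions that cancel as expected.
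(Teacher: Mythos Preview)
Your proposal is correct and follows essentially the same route as the paper: substitute the count representation of the $c_\alpha$ into the quadratic identity \eqref{conv-c} and collect terms via $\omega_i\omega_j=\omega_{[i+j]}$. The only cosmetic difference is that the paper applies \eqref{conv-c} in the equivalent form $c_{[s-\alpha]}=\sum_\beta c_{[s-\beta]}c_{[\beta-\alpha]}$, so that the substitution $c_{[-\alpha]}=\frac{1}{\#\design}\sum_k n_{\alpha,k}\omega_k$ lands directly on $\omega_k$ and no final conjugation or reindexing step is needed; your version with $\omega_{[-k]}$ followed by conjugation is the same computation seen from the other side.
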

\begin{proof}
From Equation \eqref{conv-c} we have
\begin{equation*}
c_{[s-\alpha]} = \sum_{\beta \in L} c_{[s-\beta]} \ c_{[\beta-\alpha]} \ .
\end{equation*}
Substituting the coefficients $c_{[s-\alpha]}$ expressed in terms of the level counts $(n_{\alpha,0},\dots,n_{\alpha,s-1})$, as in Equation \eqref{calpha}, we have:
\begin{eqnarray*}
\sum_{k=0}^{s-1} n_{\alpha,k}\ \omega_k=\frac 1 {\# \design} \sum_{\beta \in L} \sum_{i=0}^{s-1} n_{\beta,i}\ \omega_i \sum_{j=0}^{s-1} n_{[\alpha-\beta],j} \ \omega_j \ .
\end{eqnarray*}
Taking $k=i+j$, the thesis follows.
\end{proof}

\begin{corollary}\label{corr:convol}
We denote by $r_{\alpha,k}$ the coefficients of $\omega_k$ in Equation \eqref{eq:conv}:
\begin{equation*}
r_{\alpha,k}= \#\design \ n_{\alpha,k} - \sum_{\beta\in L} \sum_{i=0}^{s-1} n_{\beta,i}\ n_{[\alpha-\beta],[k-i]} \, .
\end{equation*}
If $s$ is prime, for each $\alpha$, Proposition \ref{pr:convol}  gives $s-1$ relationships among counts:
\begin{equation*}
 r_{\alpha,0}=r_{\alpha,1}=\dots=r_{\alpha,s-1} \ .
 \end{equation*}
\end{corollary}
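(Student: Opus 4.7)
The plan is to leverage the algebraic structure of the cyclotomic field $\mathbb{Q}(\omega_1)$, where $\omega_1=\exp(2\pi\sqrt{-1}/s)$ is the primitive $s$-th root of unity. By inspection of its definition, every $r_{\alpha,k}$ is a $\mathbb{Z}$-linear combination of $\#\design\cdot n_{\alpha,k}$ and products $n_{\beta,i}\,n_{[\alpha-\beta],[k-i]}$, hence an integer. Writing $\omega_k=\omega_1^k$, the relation supplied by Proposition \ref{pr:convol} therefore becomes a $\mathbb{Q}$-linear dependence
\begin{equation*}
\sum_{k=0}^{s-1} r_{\alpha,k}\,\omega_1^k \;=\; 0
\end{equation*}
among the powers $1,\omega_1,\ldots,\omega_1^{s-1}$ inside $\mathbb{Q}(\omega_1)$.

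Next I would invoke the hypothesis that $s$ is prime: under this assumption the $s$-th cyclotomic polynomial $\Phi_s(x)=1+x+x^2+\cdots+x^{s-1}$ is irreducible over $\mathbb{Q}$ and is the minimal polynomial of $\omega_1$. Consequently $\{1,\omega_1,\ldots,\omega_1^{s-2}\}$ is a $\mathbb{Q}$-basis of $\mathbb{Q}(\omega_1)$, and the only $\mathbb{Q}$-linear relations among $1,\omega_1,\ldots,\omega_1^{s-1}$ are scalar multiples of $\Phi_s(\omega_1)=0$. Therefore the integer vector $(r_{\alpha,0},r_{\alpha,1},\ldots,r_{\alpha,s-1})$ must be proportional to $(1,1,\ldots,1)$, which is precisely the chain of equalities $r_{\alpha,0}=r_{\alpha,1}=\cdots=r_{\alpha,s-1}$, equivalent to the $s-1$ independent relations $r_{\alpha,0}=r_{\alpha,k}$ for $k=1,\ldots,s-1$.

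The only delicate point---if it can be called one---is the clean invocation of the irreducibility of $\Phi_s$ for prime $s$; once that is granted, both integrality of the $r_{\alpha,k}$ and the passage from ``proportional to the all-ones vector'' to the stated equalities are immediate. It is worth noting in passing that primality is essential: for composite $s$ one has $\deg\Phi_s=\varphi(s)<s-1$, so additional $\mathbb{Q}$-linear relations among the $\omega_k$ exist and the conclusion need not hold, which is exactly why Proposition \ref{pr:convol} and this corollary are stated under the same primality hypothesis.
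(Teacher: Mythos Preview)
Your proof is correct and is essentially the same argument as the paper's, just spelled out in full: the paper's one-line proof simply asserts that for $s$ prime, $\sum_{k=0}^{s-1} r_k\,\omega_k=0$ with rational $r_k$ forces $r_0=\cdots=r_{s-1}$, and your invocation of the irreducibility of $\Phi_s$ over $\mathbb{Q}$ is precisely the standard justification of that fact. Your explicit remark that the $r_{\alpha,k}$ are integers (hence rational) is a useful detail the paper leaves implicit.
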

\begin{proof}
If $s$ is prime, a polynomial $\sum_{k=0}^{s-1} r_k \omega_k$ is zero if and only if $r_0=r_1=\dots=r_{s-1}$.
\end{proof}

We observe that previous relationships are not independent both because of fraction properties and complex numbers properties; for instance:
\begin{description}
\item [C1 ] $\sum_{k=0}^{t-1} n_{\alpha,k}=n$, for each $\alpha$;
\item [C2 ] $ n_{(0,\dots, 0),0}= n$ and $ n_{(0,\dots,0),k}= 0$, for  $k=1,\dots,t-1$;
\item [C3 ] $ n_{(a_1,a_2,\dots,a_m),k}=n_{([s_1-a_1],[s_2-a_2],\dots,[s_m-a_m]),[s_t-k]}$ for $(a_1,a_2,\dots,a_m) \in \{0,\dots,s_1-1\}\times\{0,\dots,s_2-1\}\times\dots\times\{0,\dots,s_m-1\}$ and $k \in   \{0,\dots,t-1\}$.
\item [C4 ] possible relationships coming from desired properties of the fraction (for instance, the fact that a term is centered and/or some terms are  mutually orthogonal, see Proposition 3 of \cite{pistone|rogantin:08}).
\end{description}

In this a way the number of variables involved in the problem is reduced and  the actual computation of all admissible patterns of a fraction can be simplified, as shown in the following examples.

We consider orthogonal arrays of strength $2$, i.e., with balanced simple and second order interactions.
\begin{itemize}
\item Fractions of a $3^3$ design with $9$ runs.

The fractions have $9$ runs.  For $k=0,1,2$, and $a,b,c=1,2$, we have:
\begin{itemize}
\item[$\diamondsuit$] $\sum_{k=0}^2 n_{(a,b,c),k}=9$  ,
\item[$\diamondsuit$] $ n_{(0,0,0),0}= 9$ \ , \quad  $n_{(0,0,0),1}=n_{(0,0,0),2}= 0 $ ,
\item[$\diamondsuit$] $ n_{(2,[3-a],[3-b]),[3-k]}= n_{(1,a,b),k}$
\item[$\diamondsuit$] $ n_{(a,0,0),k}=n_{(0,a,0),k}= n_{(0,0,a),k}=3$  \ , $n_{(a,b,0),k}= n_{(a,0,b),k}=n_{(0,a,b),k}=3$
\end{itemize}

The relationships of Corollary \ref{corr:convol}, computed with CoCoA (see \cite{CoCoA-5}), are:
\begin{align*}
&n_{(1,1,1),0}^2 - n_{(1,1,1),1}^2 - 2n_{(1,1,1),0}n_{(1,1,1),2} +
        2n_{(1,1,1),1}n_{(1,1,1),2} - 9n_{(1,1,1),0} + 9n_{(1,1,1),1},\\
&-2n_{(1,1,1),0}n_{(1,1,1),1} + n_{(1,1,1),1}^2 + 2n_{(1,1,1),0}
        n_{(1,1,1),2} - n_{(1,1,1),2}^2 - 9n_{(1,1,1),1} + 9n_{(1,1,1),2}
\end{align*}

In order to easily handle these polynomials we compute the Gr\"obner basis of corresponding ideal:
\begin{equation*}
\begin{array}{ll}
n_{(1,2,2),0} + n_{(1,2,2),1} + n_{(1,2,2),2} - 9, & \quad
n_{(1,2,1),0} + n_{(1,2,1),1} + n_{(1,2,1),2} - 9, \\
n_{(1,1,2),0} + n_{(1,1,2),1} + n_{(1,1,2),2} - 9, & \quad
n_{(1,1,1),0} + n_{(1,1,1),1} + n_{(1,1,1),2} - 9,\\
n_{(1,1,1),1}^2 - n_{(1,1,1),2}^2 - 9n_{(1,1,1),1} + 9n_{(1,1,1),2}, &\quad n_{(1,1,1),2}^3 - 12n_{(1,1,1),2}^2 + 27n_{(1,1,1),2},\\
n_{(1,1,1),1}n_{(1,1,1),2} + 1/2n_{(1,1,1),2}^2 - 9/2n_{(1,1,1),2} \ . &
\end{array}
\end{equation*}

The admissible configurations for $n_{(1,1,1)}$ are:
\begin{equation*}
(3,3,3) \ , \quad (9,0,0) \ , \quad (0,9,0) \ , \quad (0,0,9) \ .
\end{equation*}
The counts for the other $\alpha$ follow from the constraints C1-C4 listed above.

\item Fractions of a $5^3$ design with $25$ runs.
In this case the computation is heavier than in the previous example. The constraints C1-C4 give 32 polynomials in 80 variables (the counts), one of which is shown below.

{\tiny{
\begin{equation*}
\begin{array}{l}
n_{(1,1,1),0}\ n_{(1,2,2),0} - n_{(1,1,1),4}\ n_{(1,2,2),0} - n_{(1,1,1),0}\ n_{(1,2,2),1} + n_{(1,1,1),1}\ n_{(1,2,2),1} - n_{(1,1,1),1}\ n_{(1,2,2),2}
\\+ n_{(1,1,1),2}\ n_{(1,2,2),2} - n_{(1,1,1),2}\ n_{(1,2,2),3} + n_{(1,1,1),3}\ n_{(1,2,2),3} - n_{(1,1,1),3}\ n_{(1,2,3),4} + n_{(1,1,1),4}\ n_{(1,2,2),4} \\+ n_{(1,1,2),0}\ n_{(1,2,3),0} - n_{(1,1,2),4}\ n_{(1,2,3),0} - n_{(1,1,2),0}\ n_{(1,2,3),1} + n_{(1,1,2),1}\ n_{(1,2,3),1} - n_{(1,1,2),1}\ n_{(1,2,3),2} \\+ n_{(1,1,2),2}\ n_{(1,2,3),2} - n_{(1,1,2),2}\ n_{(1,2,3),3} + n_{(1,1,2),3}\ n_{(1,2,3),3} - n_{(1,1,2),3}\ n_{(1,2,3),4} + n_{(1,1,2),4}\ n_{(1,2,3),4} \\+ n_{(1,1,3),0}\ n_{(1,2,4),0} - n_{(1,1,3),4}\ n_{(1,2,4),0} - n_{(1,1,3),0}\ n_{(1,2,4),1} + n_{(1,1,3),1}\ n_{(1,2,4),1} - n_{(1,1,3),1}\ n_{(1,2,4),2} \\+ n_{(1,1,3),2}\ n_{(1,2,4),2} - n_{(1,1,3),2}\ n_{(1,2,4),3} + n_{(1,1,3),3}\ n_{(1,2,4),3} - n_{(1,1,3),3}\ n_{(1,2,4),4} + n_{(1,1,3),4}\ n_{(1,2,4),4}
%\end{array}
%\end{equation*}
%\begin{equation*}
%\begin{array}{l}
\\
+ n_{(1,1,4),0}\ n_{(1,3,1),0} - n_{(1,1,4),3}\ n_{(1,3,1),0} + n_{(1,1,4),1}\ n_{(1,3,1),1} - n_{(1,1,4),4}\ n_{(1,3,1),1} - n_{(1,1,4),0}\ n_{(1,3,1),2} \\+ n_{(1,1,4),2}\ n_{(1,3,1),2} - n_{(1,1,4),1}\ n_{(1,3,1),3} + n_{(1,1,4),3}\ n_{(1,3,1),3} - n_{(1,1,4),2}\ n_{(1,3,1),4} + n_{(1,1,4),4}\ n_{(1,3,1),4} \\+ n_{(1,2,1),0}\ n_{(1,3,2),0} - n_{(1,2,1),4}\ n_{(1,3,2),0} - n_{(1,2,1),0}\ n_{(1,3,2),1} + n_{(1,2,1),1}\ n_{(1,3,2),1} - n_{(1,2,1),1}\ n_{(1,3,2),2} \\+ n_{(1,2,1),2}\ n_{(1,3,2),2} - n_{(1,2,1),2}\ n_{(1,3,2),3} + n_{(1,2,1),3}\ n_{(1,3,2),3} - n_{(1,2,1),3}\ n_{(1,3,2),4} + n_{(1,2,1),4}\ n_{(1,3,2),4} \\+ n_{(1,1,1),0}\ n_{(1,3,3),0} - n_{(1,1,1),3}\ n_{(1,3,3),0} + n_{(1,2,2),0}\ n_{(1,3,3),0} - n_{(1,2,2),4}\ n_{(1,3,3),0} + n_{(1,1,1),1}\ n_{(1,3,3),1} \\- n_{(1,1,1),4}\ n_{(1,3,3),1} - n_{(1,2,2),0}\ n_{(1,3,3),1} + n_{(1,2,2),1}\ n_{(1,3,3),1} - n_{(1,1,1),0}\ n_{(1,3,3),2} + n_{(1,1,1),2}\ n_{(1,3,3),2}
%\end{array}
%\end{equation*}
%\begin{equation*}
%\begin{array}{l}
\\
- n_{(1,2,2),1}\ n_{(1,3,3),2} + n_{(1,2,2),2}\ n_{(1,3,3),2} - n_{(1,1,1),1}\ n_{(1,3,3),3} + n_{(1,1,1),3}\ n_{(1,3,3),3} - n_{(1,2,2),2}\ n_{(1,3,3),3} \\+ n_{(1,2,2),3}\ n_{(1,3,3),3} - n_{(1,1,1),2}\ n_{(1,3,3),4} + n_{(1,1,1),4}\ n_{(1,3,3),4} - n_{(1,2,2),3}\ n_{(1,3,3),4} + n_{(1,2,2),4}\ n_{(1,3,3),4} \\+ n_{(1,1,2),0}\ n_{(1,3,4),0} - n_{(1,1,2),3}\ n_{(1,3,4),0} + n_{(1,2,3),0}\ n_{(1,3,4),0} - n_{(1,2,3),4}\ n_{(1,3,4),0} + n_{(1,1,2),1}\ n_{(1,3,4),1} \\- n_{(1,1,2),4}\ n_{(1,3,4),1} - n_{(1,2,3),0}\ n_{(1,3,4),1} + n_{(1,2,3),1}\ n_{(1,3,4),1} - n_{(1,1,2),0}\ n_{(1,3,4),2} + n_{(1,1,2),2}\ n_{(1,3,4),2} \\- n_{(1,2,3),1}\ n_{(1,3,4),2} + n_{(1,2,3),2}\ n_{(1,3,4),2} - n_{(1,1,2),1}\ n_{(1,3,4),3} + n_{(1,1,2),3}\ n_{(1,3,4),3} - n_{(1,2,3),2}\ n_{(1,3,4),3} \\+ n_{(1,2,3),3}\ n_{(1,3,4),3} - n_{(1,1,2),2}\ n_{(1,3,4),4} + n_{(1,1,2),4}\ n_{(1,3,4),4} - n_{(1,2,3),3}\ n_{(1,3,4),4} + n_{(1,2,3),4}\ n_{(1,3,4),4}
%\end{array}
%\end{equation*}
%\begin{equation*}
%\begin{array}{l}
\\
+ n_{(1,1,3),0}\ n_{(1,4,1),0} - n_{(1,1,3),2}\ n_{(1,4,1),0} + n_{(1,2,4),0}\ n_{(1,4,1),0} - n_{(1,2,4),3}\ n_{(1,4,1),0} + n_{(1,1,3),1}\ n_{(1,4,1),1} \\- n_{(1,1,3),3}\ n_{(1,4,1),1} + n_{(1,2,4),1}\ n_{(1,4,1),1} - n_{(1,2,4),4}\ n_{(1,4,1),1} + n_{(1,1,3),2}\ n_{(1,4,1),2} - n_{(1,1,3),4}\ n_{(1,4,1),2} \\- n_{(1,2,4),0}\ n_{(1,4,1),2} + n_{(1,2,4),2}\ n_{(1,4,1),2} - n_{(1,1,3),0}\ n_{(1,4,1),3} + n_{(1,1,3),3}\ n_{(1,4,1),3} - n_{(1,2,4),1}\ n_{(1,4,1),3} \\+ n_{(1,2,4),3}\ n_{(1,4,1),3} - n_{(1,1,3),1}\ n_{(1,4,1),4} + n_{(1,1,3),4}\ n_{(1,4,1),4} - n_{(1,2,4),2}\ n_{(1,4,1),4} + n_{(1,2,4),4}\ n_{(1,4,1),4} \\+ n_{(1,1,4),0}\ n_{(1,4,2),0} - n_{(1,1,4),2}\ n_{(1,4,2),0} + n_{(1,3,1),0}\ n_{(1,4,2),0} - n_{(1,3,1),4}\ n_{(1,4,2),0} + n_{(1,1,4),1}\ n_{(1,4,2),1} \\- n_{(1,1,4),3}\ n_{(1,4,2),1} - n_{(1,3,1),0}\ n_{(1,4,2),1} + n_{(1,3,1),1}\ n_{(1,4,2),1} + n_{(1,1,4),2}\ n_{(1,4,2),2} - n_{(1,1,4),4}\ n_{(1,4,2),2}
%\end{array}
%\end{equation*}
%\begin{equation*}
%\begin{array}{l}
\\- n_{(1,3,1),1}\ n_{(1,4,2),2} + n_{(1,3,1),2}\ n_{(1,4,2),2} - n_{(1,1,4),0}\ n_{(1,4,2),3} + n_{(1,1,4),3}\ n_{(1,4,2),3} - n_{(1,3,1),2}\ n_{(1,4,2),3} \\+ n_{(1,3,1),3}\ n_{(1,4,2),3} - n_{(1,1,4),1}\ n_{(1,4,2),4} + n_{(1,1,4),4}\ n_{(1,4,2),4} - n_{(1,3,1),3}\ n_{(1,4,2),4} + n_{(1,3,1),4}\ n_{(1,4,2),4} \\+ n_{(1,2,1),0}\ n_{(1,4,3),0} - n_{(1,2,1),3}\ n_{(1,4,3),0} + n_{(1,3,2),0}\ n_{(1,4,3),0} - n_{(1,3,2),4}\ n_{(1,4,3),0} + n_{(1,2,1),1}\ n_{(1,4,3),1} \\- n_{(1,2,1),4}\ n_{(1,4,3),1} - n_{(1,3,2),0}\ n_{(1,4,3),1} + n_{(1,3,2),1}\ n_{(1,4,3),1} - n_{(1,2,1),0}\ n_{(1,4,3),2} + n_{(1,2,1),2}\ n_{(1,4,3),2} \\- n_{(1,3,2),1}\ n_{(1,4,3),2} + n_{(1,3,2),2}\ n_{(1,4,3),2} - n_{(1,2,1),1}\ n_{(1,4,3),3} + n_{(1,2,1),3}\ n_{(1,4,3),3} - n_{(1,3,2),2}\ n_{(1,4,3),3} \\+ n_{(1,3,2),3}\ n_{(1,4,3),3} - n_{(1,2,1),2}\ n_{(1,4,3),4} + n_{(1,2,1),4}\ n_{(1,4,3),4} - n_{(1,3,2),3}\ n_{(1,4,3),4} + n_{(1,3,2),4}\ n_{(1,4,3),4}
%\end{array}
%\end{equation*}
%\begin{equation*}
%\begin{array}{l}
\\
+ n_{(1,1,1),0}\ n_{(1,4,4),0} - n_{(1,1,1),2}\ n_{(1,4,4),0} + n_{(1,2,2),0}\ n_{(1,4,4),0} - n_{(1,2,2),3}\ n_{(1,4,4),0} + n_{(1,3,3),0}\ n_{(1,4,4),0} \\- n_{(1,3,3),4}\ n_{(1,4,4),0} + n_{(1,1,1),1}\ n_{(1,4,4),1} - n_{(1,1,1),3}\ n_{(1,4,4),1} + n_{(1,2,2),1}\ n_{(1,4,4),1} - n_{(1,2,2),4}\ n_{(1,4,4),1} \\- n_{(1,3,3),0}\ n_{(1,4,4),1} + n_{(1,3,3),1}\ n_{(1,4,4),1} + n_{(1,1,1),2}\ n_{(1,4,4),2} - n_{(1,1,1),4}\ n_{(1,4,4),2} - n_{(1,2,2),0}\ n_{(1,4,4),2} \\+ n_{(1,2,2),2}\ n_{(1,4,4),2} - n_{(1,3,3),1}\ n_{(1,4,4),2} + n_{(1,3,3),2}\ n_{(1,4,4),2} - n_{(1,1,1),0}\ n_{(1,4,4),3} + n_{(1,1,1),3}\ n_{(1,4,4),3} \\- n_{(1,2,2),1}\ n_{(1,4,4),3} + n_{(1,2,2),3}\ n_{(1,4,4),3} - n_{(1,3,3),2}\ n_{(1,4,4),3} + n_{(1,3,3),3}\ n_{(1,4,4),3} - n_{(1,1,1),1}\ n_{(1,4,4),4} \\+ n_{(1,1,1),4}\ n_{(1,4,4),4} - n_{(1,2,2),2}\ n_{(1,4,4),4} + n_{(1,2,2),4}\ n_{(1,4,4),4} - n_{(1,3,3),3}\ n_{(1,4,4),4} + n_{(1,3,3),4}\ n_{(1,4,4),4}
\end{array}
\end{equation*}}}

In this example it is unfeasible to explicitly compute all the admissible configurations, nevertheless one can check whether a given set of counts is admissible or not.
\end{itemize}

\section{Generalized Word Length Pattern}\label{sec:GWLP}

As mentioned in the Introduction, the $j$-th element of the GWLP of a fraction can be computed as the sum of the aberrations of the interaction terms $X^\alpha$ of order $j$. In this section, we make use of the formula in Equation \eqref{calpha} to express the aberrations as functions of the level counts, and we study the mean aberration of a term over the level permutations. The analysis of the mean aberration is valid for generic asymmetric multilevel designs, while its use for the computation of the GWLP is limited to symmetric $s^m$ designs, with $s$ prime number.

\begin{definition} \label{wlp1}
Given an interaction $X^\alpha$ defined on a fraction $\fraction$ of the full factorial design $\design$, its \emph{aberration}, or degree of aliasing, $a_\alpha$ is given by the real number
\begin{equation*}
a_\alpha=  \frac{ \|c_{\alpha}\|_2^2 }{c_{0}^2}
\end{equation*}
where $\| x \|_2^2$ is square of the norm of the complex number $x$.

The GWLP $A(\fraction)=(A_1(\fraction),\ldots,A_m(\fraction))$ of a fraction $\fraction$  is defined as
\begin{equation*}
A_j(\fraction)= \sum_{\|\alpha \|_0 =j}  a_{\alpha} \quad j=1,\ldots,m \ ,
\end{equation*}
where $\|\alpha \|_0$ is the number of non-null elements of $\alpha$, i.e., the order of interaction of $X^\alpha$.
\end{definition}

The following proposition allows us to compute the aberration  $a_\alpha$ without using complex computation.
\begin{proposition}
Let $X^\alpha$ be a simple or interaction term with values in $\Omega_{t}$. Its aberration $a_\alpha$ is
\begin{equation}\label{al}
a_\alpha=\frac{1}{n^2}\left( \sum_{k=0}^{t-1}  \cos\left(\frac{2\pi}{t} k \right) \sum_{i=0}^{t-1} n_{\alpha,i}
n_{\alpha,[i-k]}  \right) \, .
\end{equation}
\end{proposition}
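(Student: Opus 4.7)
The plan is to compute $\|c_\alpha\|_2^2$ directly from the expression of $c_\alpha$ given in Equation \eqref{calpha}, and then divide by $c_0^2$, which by Proposition \ref{pr:ceqn} equals $n^2/(\#\design)^2$.

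First I would rewrite $c_\alpha$ in a form where the count index matches the root index. Starting from $c_\alpha = \frac{1}{\#\design}\sum_{h=0}^{t-1} n_{\alpha,t-h}\,\omega_h$, the substitution $h \mapsto t-h$ together with $\omega_{t-h}=\overline{\omega_h}$ yields $c_\alpha = \frac{1}{\#\design}\sum_{h=0}^{t-1} n_{\alpha,h}\,\overline{\omega_h}$, and hence $\overline{c_\alpha} = \frac{1}{\#\design}\sum_{h=0}^{t-1} n_{\alpha,h}\,\omega_h$. Multiplying these two representations gives
\begin{equation*}
\|c_\alpha\|_2^2 = c_\alpha\,\overline{c_\alpha} = \frac{1}{(\#\design)^2}\sum_{i=0}^{t-1}\sum_{j=0}^{t-1} n_{\alpha,i}\,n_{\alpha,j}\,\omega_{[j-i]} .
\end{equation*}

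Next I would collect the double sum by the difference index. Setting $k=[j-i]_t$, so that $j=[i+k]$, and using the cyclic change of variable $k\mapsto t-k$ (which turns $[i+k]$ into $[i-k]$ and $\omega_k$ into $\overline{\omega_k}$), I obtain
\begin{equation*}
\|c_\alpha\|_2^2 = \frac{1}{(\#\design)^2}\sum_{k=0}^{t-1} \overline{\omega_k}\sum_{i=0}^{t-1} n_{\alpha,i}\,n_{\alpha,[i-k]} .
\end{equation*}
Since the left-hand side is manifestly a non-negative real number, I may replace $\overline{\omega_k}$ by its real part $\cos(2\pi k/t)$; equivalently, pairing $k$ with $t-k$ in the original sum gives the cosine by $\omega_k+\omega_{t-k}=2\cos(2\pi k/t)$ together with the symmetry $\sum_i n_{\alpha,i}n_{\alpha,[i-k]} = \sum_i n_{\alpha,i}n_{\alpha,[i+k]}$ (a re-indexing of $i$).

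Finally, dividing by $c_0^2 = n^2/(\#\design)^2$ cancels the factor $(\#\design)^{-2}$ and produces exactly the claimed formula \eqref{al}. The only subtle step is the justification that the $\omega_k$ may be replaced by $\cos(2\pi k/t)$; this is not an obstacle per se, but it is the place where one must be careful about whether the reindexing produces $[i-k]$ or $[i+k]$. Everything else is a routine expansion using the elementary properties of the roots of unity already collected in Section \ref{sec:alg}.
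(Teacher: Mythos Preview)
Your proof is correct and follows exactly the route indicated in the paper: the paper's own proof is a one-line remark that the formula ``follows from Definition \ref{wlp1} using Equation \eqref{calpha}'', and you have simply made that computation explicit. The only point worth noting is that your justification for replacing $\overline{\omega_k}$ by $\cos(2\pi k/t)$ is cleanest in the form you give second (the inner sums $\sum_i n_{\alpha,i}n_{\alpha,[i-k]}$ are real and invariant under $k\mapsto t-k$, so the imaginary parts cancel in pairs), rather than appealing to the reality of the total; either way the step is valid.
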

\begin{proof}
This formula follows from Definition \ref{wlp1} using Equation \eqref{calpha}.
\end{proof}

We observe that, if $t$ is odd, we get
\begin{equation*}
a_\alpha=\frac{1}{n^2}\left( \sum_{i=0}^{t-1}  n_{\alpha,i}^2+ 2 \sum_{k=1}^{(t-1)/2} \cos\left(\frac{2\pi}{t} k
\right)  \sum_{i=0}^{t-1}  n_{\alpha,i} n_{\alpha,[i-k]} \right) \, .
\end{equation*}
Moreover, for $t=2,3,4$ we obtain
\begin{equation*}
n^2 a_\alpha=
\begin{cases}
 n_0^2+n_1^2-2n_0n_1 &\mbox{ if } t=2 \\
 n_{0}^2+n_{1}^2+n_{2}^2- n_{0}n_{2}- n_{1}n_{0}- n_{2}n_{1} &\mbox{ if } t=3  \\
 n_{0}^2+n_{1}^2+n_{2}^2+n_{3}^2-2n_{0}n_{2}-2n_{1}n_{3} &\mbox{ if } t=4
\end{cases}
\end{equation*}
where the suffix $\alpha$ is omitted to simplify the notation.

It is well known that if a term $X^\alpha$ is balanced on a fraction (i.e., $n_{\alpha,k}=n/t$ for all $k=1,\dots,t-1$), then $a_\alpha = 0$. Equation \eqref{al} gives an alternative proof of this fact. However, two disadvantages in the definition of $a_\alpha$ come out when facing with qualitative factors: (a) the aberration $a_\alpha$ is not independent on the permutation of the levels; (b) the fact that $a_\alpha=0$ does not guarantee that term $X^\alpha$, with $t$ levels, is balanced if $t$ is not prime.

To illustrate such problems, let us consider a term $X^\alpha$ with 6 levels and counts $n_\alpha=(u_0+h,u_1,u_0,u_1+h,u_0,u_1)$ for given $u_0,u_1,h \in {\mathbb N}$. It is straightforward to check that $a_\alpha=0$, even though  $X^\alpha$ is not balanced. Moreover, permuting the first two counts $(u_1,u_0+h,u_0,u_1+h,u_0,u_1)$, Equation \eqref{calpha} yields $n^2 a_\alpha = ((u_0-u_1)+h)^2$, while permuting the first and the third counts, $(u_0,u_1,u_0+h,u_1+h,u_0,u_1)$, one obtains $n^2a_\alpha = 3h^2$. To show the relevance of the position of the counts, such three configurations of counts are depicted on the unit circle in Figure \ref{fig:six}.

\begin{figure}\label{fig:six}
\begin{center}
\includegraphics[width=12 cm]{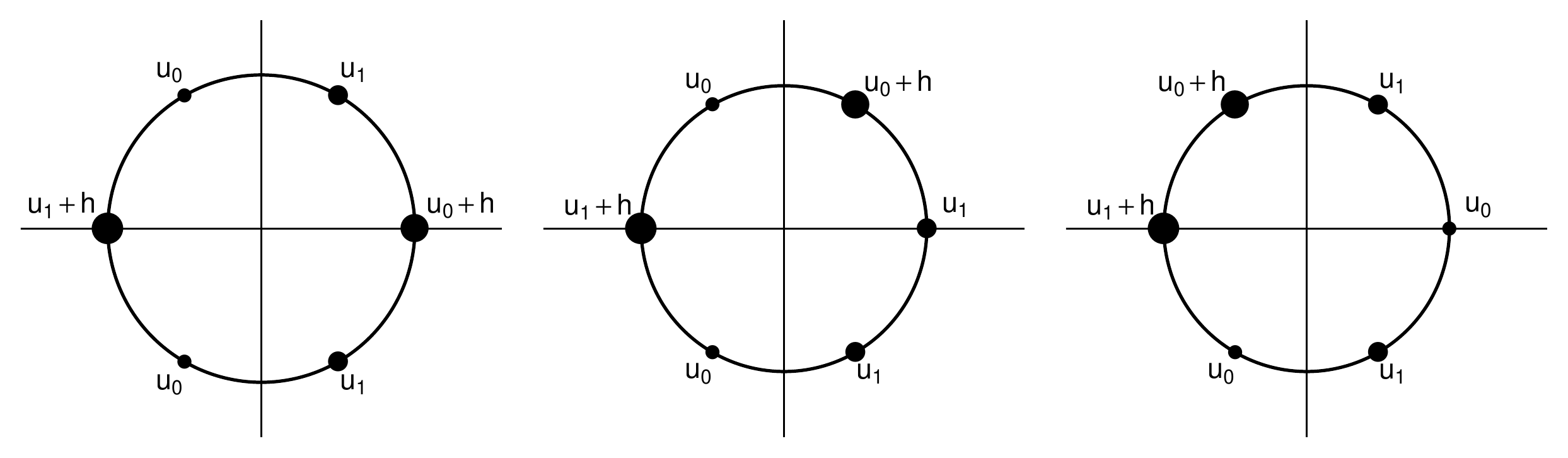}
\end{center}
\caption{Three permutations of counts for a 6-level term. Left:  $a_\alpha=0$; center:  $ a_\alpha = ((u_0-u_1)+h)^2/n^2$; right:   $a_\alpha = 3h^2/n^2 $}
\end{figure}

\subsection{Mean aberration}

As a consequence of the previous discussion, we define here a permutation-invariant aberration of a term $X^\alpha$ as the mean of the aberration obtained through all the possible permutations of the counts $\left\{n_0,\ldots,n_{t-1}\right\}$ of $X^\alpha$.

\begin{definition}
Given a fraction $\mathcal F$ of $\design$ let $X^\alpha$ be a simple or interaction term with $t$ levels and be $(n_{\alpha,0},\dots, n_{\alpha,t-1})$ the number of occurrences of its levels $(0,1,...,t-1)$ respectively. The \emph{mean aberration} of $X^\alpha$ is the mean of all the $a_\alpha$'s obtained by permuting $(n_{\alpha,0},\dots, n_{\alpha,t-1})$:
\begin{equation} \label{properties}
\overline{a}_\alpha = \frac 1 {t!} \sum_{\pi(n_{\alpha,0},\dots, n_{\alpha,t-1}) } a_\alpha \ .
\end{equation}
where $\pi(n_{\alpha,0},\dots, n_{\alpha,t-1})$ denotes all the permutations of the counts of $X^\alpha$.
\end{definition}
The subscript $\alpha$ in $n_{\alpha,i}$ will be omitted in the following when unnecessary.

\begin{proposition} \label{prop:mean_ab}
The mean aberration is:
\begin{equation}\label{eq:mean_ab}
\overline{a}_\alpha =\frac{1}{n^2}\left(\sum_{i=0}^{t-1} n_i^2 -  \frac{2}{t-1}  \sum_{i=0}^{t-1} \sum_{j=i+1}^{t-1} n_i n_j \right)= \frac{1}{n^2}\frac{1}{t-1}\sum_{i=0}^{t-1}  \sum_{j=i}^{t-1} \left( n_i-n_j \right)^2 \ .
\end{equation}
\end{proposition}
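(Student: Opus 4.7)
I would start from the explicit count-based expression for $a_\alpha$ given in Equation~\eqref{al} and then average term-by-term over all $t!$ permutations of $(n_0,\dots,n_{t-1})$. Writing the average as
\[
\overline{a}_\alpha = \frac{1}{n^2}\sum_{k=0}^{t-1}\cos\!\left(\tfrac{2\pi}{t}k\right)\,\frac{1}{t!}\sum_{\pi} \sum_{i=0}^{t-1} n_{\pi(i)}\,n_{\pi([i-k])},
\]
the proof reduces to evaluating the inner permutation-average for each fixed $k$.

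For $k=0$ the inner sum is $\sum_i n_i^2$, which is permutation-invariant, so its average is $\sum_i n_i^2$. For $k\neq 0$ the crucial observation is that the indices $i$ and $[i-k]$ are always distinct (mod $t$), hence each of the $t$ summands is a product $n_a n_b$ with $a\neq b$; under a uniformly random permutation the pair $(\pi(i),\pi([i-k]))$ is a uniformly chosen ordered pair of distinct indices, so
\[
\frac{1}{t!}\sum_\pi n_{\pi(i)}n_{\pi([i-k])} \;=\; \frac{1}{t(t-1)}\sum_{a\neq b} n_a n_b \;=\; \frac{2}{t(t-1)}\sum_{i<j} n_i n_j.
\]
Summing over the $t$ values of the outer index $i$ gives $\frac{2}{t-1}\sum_{i<j}n_in_j$, which is independent of $k$. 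Thus
\[
\overline{a}_\alpha \;=\; \frac{1}{n^2}\left(\sum_i n_i^2 \;+\; \frac{2}{t-1}\sum_{i<j} n_i n_j \sum_{k=1}^{t-1}\cos\!\left(\tfrac{2\pi}{t}k\right)\right).
\]

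I would then invoke the standard identity $\sum_{k=0}^{t-1}\cos(2\pi k/t)=0$, i.e.\ $\sum_{k=1}^{t-1}\cos(2\pi k/t)=-1$, which collapses the expression into the first claimed equality. The second equality is a short algebraic rearrangement: expanding $\sum_{i<j}(n_i-n_j)^2 = (t-1)\sum_i n_i^2 - 2\sum_{i<j}n_in_j$ and dividing by $t-1$ matches the displayed form.

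The only delicate point is the symmetry argument for $k\neq 0$ (step above), namely that the $t$ terms in $\sum_i n_{\pi(i)}n_{\pi([i-k])}$ are each averaged over ordered pairs of distinct indices; everything else is a routine manipulation of roots of unity and of a quadratic form in the counts.
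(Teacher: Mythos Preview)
Your proposal is correct and follows essentially the same route as the paper's proof: start from Equation~\eqref{al}, separate the permutation-invariant $k=0$ summand from the $k\neq 0$ summands, average the latter over all permutations by the symmetry/counting argument that each monomial $n_an_b$ with $a\neq b$ appears equally often, and finish with $\sum_{k=1}^{t-1}\cos(2\pi k/t)=-1$; the second equality is then the same quadratic identity. Your phrasing of the averaging step via ``$(\pi(i),\pi([i-k]))$ is a uniformly random ordered pair of distinct indices'' is a slightly cleaner packaging of the paper's direct count, but the argument is the same.
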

\begin{proof}
Take Equation \eqref{al} and first separate the cases $k=0$ and $k>0$ obtaining:
\begin{equation*}
a_\alpha = \frac 1 {n^2} \sum_{i=0}^{t-1} n_i^2 + \frac 1 {n^2} \sum_{k=1}^{t-1}  \cos\left(\frac{2\pi}{t} k \right) \sum_{i=0}^{t-1} n_{i} n_{[i-k]}  \, .
\end{equation*}
The first addendum is permutation-invariant and therefore it is enough to rewrite the second addendum. For fixed $k \ne 0$, we get
\begin{equation*}
\frac 1 {t!} \sum_{\pi(n_{0},\dots, n_{t-1})} \sum_{i=0}^{t-1} n_{i}n_{[i-k]}
\end{equation*}
where in the summations there are $t(t!)$ addenda. For fixed $i$ and $j$ each monomial $n_in_j$ with $i \ne j$ appears exactly $t(t-1)$ times, so that
\begin{equation*}
\frac 1 {t!} \sum_{\pi(n_{0},\dots, n_{t-1})} \sum_{i=0}^{t-1} n_{i}n_{[i-k]} = \frac {1} {t-1} \sum_{i=0}^{t-1} \sum_{j \ne i, j=1}^{t-1} n_i n_j \ .
\end{equation*}
The first equality in Equation \eqref{properties} is then proved, recalling that $\sum_{k=1}^{t-1} \cos \left(\frac {2\pi} k t\right) = -1$.

The second equality is straightforward.
\end{proof}

\begin{remark}
Notice that if the number of levels of $X^\alpha$  in $\design$ is a prime number, the mean aberration of $X^\alpha$ on a fraction is zero if and only the corresponding aberration is zero:
\begin{equation*}
a_\alpha=0 \Leftrightarrow \overline{a}_\alpha = 0 \Leftrightarrow n_0=n_1=\dots = n_{t-1} \, .
\end{equation*}
It is an easy consequence of the properties of the roots of the unity. On the other hand, if the number of levels  is  not a prime number, then  aberration and  mean aberration have different
behaviors. The mean aberration is zero if and only if all the levels appear equally often:
\begin{equation*}
\overline{a}_\alpha=0 \quad \Leftrightarrow \quad n_0=n_1=\dots = n_{t-1}
\end{equation*}
see the right expression of Equation \ref{eq:mean_ab}.
Whereas this fact is not true for aberration, as explained in examples of Figure \ref{fig:six}, where $t=6$ and the level counts are not equal. In all the three cases
\begin{equation*}
\overline{a}_\alpha=\frac 1 {5 n^2}(9(u_0-u_1)^2+8h^2)
\end{equation*}
while the three aberration are
\begin{equation*}
a_\alpha=0 \, , \qquad a_\alpha = ((u_0-u_1)+h)^2/n^2 \, , \qquad a_\alpha = 3h^2/n^2 \, .
\end{equation*}
\end{remark}

Another nice property of mean aberration is that it represents the variances of the counts $\left\{n_0,\ldots,n_{s-1}\right\}$ as
shown in the following proposition.

\begin{proposition}
Given a fraction $\mathcal F$ of $\design$ let $X^\alpha$ be a simple or interaction term with $t$ levels and with level counts
$\left\{n_0,\ldots,n_{t-1}\right\}$. The mean aberration $\overline{a}_{\alpha}$ of $X^\alpha$ is the variance $\sigma^2$ of $\left\{n_0,\ldots,n_{t-1}\right\}$.
\end{proposition}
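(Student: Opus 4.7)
The plan is to start from the second equality in Proposition \ref{prop:mean_ab}, which expresses the mean aberration as
\begin{equation*}
\overline{a}_\alpha \;=\; \frac{1}{n^2(t-1)}\sum_{i=0}^{t-1}\sum_{j=i}^{t-1}(n_i-n_j)^2.
\end{equation*}
The diagonal terms $j=i$ vanish, so this is really $\frac{1}{n^2(t-1)}\sum_{0\le i<j\le t-1}(n_i-n_j)^2$, a pure sum of squared pairwise differences of the level counts. The whole statement then reduces to rewriting this sum of pairwise squared differences as the sum of squared deviations of the counts from their mean, which is, up to the appropriate normalization, the variance $\sigma^2$ of $\{n_0,\ldots,n_{t-1}\}$.

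The key algebraic identity I would invoke is
\begin{equation*}
\sum_{0\le i<j\le t-1}(n_i-n_j)^2 \;=\; t\sum_{i=0}^{t-1}(n_i-\bar n)^2, \qquad \bar n \;=\; \frac{1}{t}\sum_{i=0}^{t-1}n_i \;=\; \frac{n}{t}.
\end{equation*}
This is obtained by expanding $(n_i-n_j)^2$ and collecting: the left-hand side equals $(t-1)\sum_i n_i^2 - 2\sum_{i<j}n_i n_j$; one then eliminates the cross-sum via $(\sum_i n_i)^2 = \sum_i n_i^2 + 2\sum_{i<j}n_i n_j$, together with $\sum_i n_i = n$, and rearranges to $t\sum_i n_i^2 - n^2 = t\sum_i(n_i-\bar n)^2$.

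Substituting back yields
\begin{equation*}
\overline{a}_\alpha \;=\; \frac{t}{n^2(t-1)}\sum_{i=0}^{t-1}(n_i-\bar n)^2,
\end{equation*}
so that, once the right-hand side is repackaged as the variance $\sigma^2$ of the multiset $\{n_0,\ldots,n_{t-1}\}$ in the normalization adopted in the statement, the claimed identification $\overline{a}_\alpha=\sigma^2$ drops out. I do not anticipate any real obstacle here: the argument is a one-line algebraic manipulation based on the pairwise-difference identity, and the only care required is to match the prefactor $t/((t-1)n^2)$ against the convention chosen for $\sigma^2$ so that the equality holds on the nose.
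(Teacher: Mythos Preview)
Your proposal is correct and follows essentially the same route as the paper: both start from the second equality in Proposition~\ref{prop:mean_ab}, rewrite the sum of pairwise squared differences $\sum_{i<j}(n_i-n_j)^2$ in terms of $\sum_i n_i^2$ and $n=\sum_i n_i$, and arrive at $\overline{a}_\alpha = \frac{t^2}{n^2(t-1)}\sigma^2$ with $\sigma^2=\frac{1}{t}\sum_i n_i^2 - (n/t)^2$ the population variance of the counts. The only cosmetic difference is that the paper first symmetrizes the half-sum to the full double sum $\frac{1}{2}\sum_{i,j}(n_i-n_j)^2$ before expanding, whereas you invoke the identity $\sum_{i<j}(n_i-n_j)^2=t\sum_i(n_i-\bar n)^2$ directly; these are the same computation. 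Your caveat about matching the prefactor is well placed: the paper's statement ``is the variance'' is informal, and what is actually proved (there and in your argument) is the proportionality with constant $t^2/(n^2(t-1))$.
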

\begin{proof}
From Proposition \ref{prop:mean_ab} we get
\begin{eqnarray*}
\overline{a}_\alpha &=& \frac{1}{n^2}\frac{1}{t-1}\sum_{i=0}^{t-1}  \sum_{j=i}^{t-1} \left( n_i-n_j \right)^2
= \frac{1}{n^2}\frac{1}{t-1}\frac{1}{2}\sum_{i=0}^{t-1}  \sum_{j=0}^{t-1} \left( n_i-n_j \right)^2 =\\
&=& \frac{1}{n^2} \frac{1}{t-1} \frac{1}{2} \left(2 t \sum_{i=0}^{t-1} n_i^2 - 2 n^2\right)
= \frac{t^2}{n^2(t-1)} \left( \frac{\sum_{i=0}^{t-1} n_i^2}{t} - \frac{n^2}{t^2} \right) =
 \frac{t^2}{n^2(t-1)} \sigma^2
\end{eqnarray*}
where $\displaystyle \sigma^2=\frac{\sum_{i=0}^{t-1} n_i^2}{t} - \frac{n^2}{t^2}$ is the variance of $\left\{n_0,\ldots,n_{t-1}\right\}$.
\end{proof}

The variance is an index to describe the distribution of counts $(n_{0},n_{1}, \dots,n_{t-1})$. Other indices can be used and, for instance, the authors in \cite{fontanaetal:14a} shown the connections between the aberration and Gini concentration index in the two-factor multi-level case.

\begin{remark}[Class of equivalence of aberrations]
The value of $a_\alpha$ does not change for permutations of all the counts in conjugate positions, e.g., for $t=5$, permuting $n_1$ with $n_4$ and $n_2$ with $n_3$ simultaneously. It follows from Equation \eqref{al}. Moreover the same holds for each of the $t-1$ circular permutations of levels, e.g., for $t=5$, permuting $(n_0,n_1,n_2,n_3,n_4)$ in $(n_1,n_2,n_3,n_4,n_0)$ or $(n_2,n_3,n_4,n_0,n_1)$ or $(n_3,n_4,n_0,n_1,n_2)$ or $(n_4,n_0,n_1,n_2,n_3)$.
Then, there are $(t-1)!/2$ class of permutation with different $a_\alpha$.

We explicitly show that the permutation of only two counts in conjugate position is not enough to preserve the same aberration. Consider two terms $X_1^\alpha$ and $X_2^\alpha$ with $t=5$ levels and counts $(n_0,n_1,n_2,n_3,n_4)$ and $(n_0,n_4,n_2,n_3,n_1)$ respectively, where only $n_1$ and $n_4$ are permuted. Let $a_\alpha^1$ and $a_\alpha^2$ the two corresponding aberrations. The difference between $a_\alpha^1$ and $a_\alpha^2$ is:
\begin{equation*}
2 \left(\cos \frac{2 \pi} 5  - \cos \frac{4 \pi} 5\right) (n_1-n_4)(n_2-n_3) \ .
\end{equation*}
\end{remark}

\subsection{GWLP}

From now on we consider symmetric fractional factorial designs $s^m$ with $s$ prime, where each $X^\alpha$ takes values in ${\Omega}_s$.

Restricting to symmetric designs with a prime number of levels, GWLP of a fraction $\fraction$,  $A_\fraction=(A_1(\fraction),\ldots,A_m(\fraction))$, can be computed using only the mean aberrations.

\begin{proposition}
Let $\fraction$ be a fraction of a $s^m$ design, $s$ prime number. Then $j$-th element of the GWLP is:
\begin{equation*}
A_j(\fraction)= \sum_{ \Vert \alpha \Vert_0 =j}  \overline{a}_{\alpha} \quad j=1,\ldots,m \ .
\end{equation*}
\end{proposition}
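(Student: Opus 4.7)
The plan is to exploit the multiplicative action of $\mathbb{Z}_s^\ast=\{1,\ldots,s-1\}$ on $L\setminus\{0\}$ given by $\alpha\mapsto k\alpha$. Because $s$ is prime, $\mathbb{Z}_s$ is a field, so each orbit $[\alpha]=\{k\alpha:k\in\mathbb{Z}_s^\ast\}$ contains exactly $s-1$ distinct elements. Moreover, the action preserves the support of $\alpha$, hence $\|\alpha\|_0$; thus $\{\beta\in L:\|\beta\|_0=j\}$ decomposes into disjoint orbits of size $s-1$. I will show that on each such orbit
\begin{equation*}
\sum_{k=1}^{s-1} a_{k\alpha}\;=\;\sum_{k=1}^{s-1}\overline{a}_{k\alpha}\;=\;(s-1)\,\overline{a}_\alpha,
\end{equation*}
and then sum over orbits to obtain $A_j(\fraction)=\sum_{\|\alpha\|_0=j}\overline{a}_\alpha$.

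For the second equality I would first observe that $X^{k\alpha}(\zeta)=X^\alpha(\zeta)^k=\omega_{[kh]}$ whenever $X^\alpha(\zeta)=\omega_h$; since multiplication by $k\in\mathbb{Z}_s^\ast$ is a bijection of $\{0,\ldots,s-1\}$, the tuple $(n_{k\alpha,0},\ldots,n_{k\alpha,s-1})$ is simply a permutation of $(n_{\alpha,0},\ldots,n_{\alpha,s-1})$ (explicitly, $n_{k\alpha,l}=n_{\alpha,[k^{-1}l]}$). The permutation invariance built into the definition of $\overline{a}$ then forces $\overline{a}_{k\alpha}=\overline{a}_\alpha$ for all $k$.

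The main obstacle is the first equality, $\sum_{k=1}^{s-1}a_{k\alpha}=(s-1)\overline{a}_\alpha$. I would apply Equation \eqref{al} to $k\alpha$, substitute $n_{k\alpha,l}=n_{\alpha,[k^{-1}l]}$, and change summation variables to $i'=[k^{-1}i]$, $h'=[k^{-1}h]$, obtaining
\begin{equation*}
a_{k\alpha}=\frac{1}{n^2}\sum_{h'=0}^{s-1}\cos\!\left(\frac{2\pi kh'}{s}\right)\sum_{i'=0}^{s-1}n_{\alpha,i'}\,n_{\alpha,[i'-h']}.
\end{equation*}
Summing over $k\in\mathbb{Z}_s^\ast$ and using that $\sum_{k=1}^{s-1}\cos(2\pi kh'/s)$ equals $s-1$ when $h'=0$ and $-1$ otherwise (which follows from $\sum_{k=0}^{s-1}e^{\ic 2\pi kh'/s}=0$ when $h'\not\equiv 0\pmod{s}$), together with the identity $\sum_{h=1}^{s-1}\sum_{i}n_{\alpha,i}n_{\alpha,[i-h]}=n^2-\sum_i n_{\alpha,i}^2$, yields $\sum_{k=1}^{s-1}a_{k\alpha}=(s\sum_i n_{\alpha,i}^2-n^2)/n^2$. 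This coincides with $(s-1)$ times the closed form $\overline{a}_\alpha=(s\sum_i n_{\alpha,i}^2-n^2)/(n^2(s-1))$ obtained by expanding the second expression in Proposition \ref{prop:mean_ab}.

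I anticipate the cosine-sum bookkeeping to be the most error-prone step; the remainder is orbit counting. It is also worth remarking why primality is essential: when $s$ is composite, $X^\alpha$ may take values in a strict subset $\Omega_t\subsetneq\Omega_s$ (Proposition \ref{pr:P-1-2}), and the multiplicative action of the units of $\mathbb{Z}_s$ no longer realises the full permutation average defining $\overline{a}_\alpha$, which is consistent with the paper's remark that the identity fails in general.
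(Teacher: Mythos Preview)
Your argument is correct and follows essentially the same route as the paper: both group the terms into orbits $\{\alpha,2\alpha,\ldots,(s-1)\alpha\}$ under the multiplicative action of $\mathbb{Z}_s^\ast$, observe that the counts of $X^{k\alpha}$ are permutations of those of $X^\alpha$, and show that the orbit sum of aberrations equals $(s-1)\overline{a}_\alpha$. The only cosmetic difference is that the paper establishes the orbit identity by counting how often each product $n_in_j$ appears, whereas you evaluate the cosine sum $\sum_{k=1}^{s-1}\cos(2\pi kh'/s)$ directly and compare with the closed form for $\overline{a}_\alpha$; the two computations are equivalent.
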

\begin{proof}
Let $X^\alpha$ be a term of order $j$ with counts $(n_0, \ldots ,n_{s-1})$. Consider the powers of $X^\alpha$ with $\alpha$ multi-exponents in $L$, namely $X^\alpha, X^{[2\alpha]}, \ldots, X^{[(s-1)\alpha]}$. Observe that, as $s$ is prime, the counts of such powers are all permutations of $(n_0, \ldots ,n_{s-1})$. In more details, these counts  have the following properties.
\begin{itemize}
\item $n_0$ is always in the count associated to $\omega_0$ in all the powers $X^{[h\alpha]}$, $h \in \{1,\dots,s-1\}$.

\item Let $n_i$, $i \ne 0$, be the count associated to $\omega_i$ of $X^\alpha$. $n_i$ is the count associated to $\omega_{[hi]}$  of $X^{[h\alpha]}$ and  to $\omega_{[ki]}$  of $X^{[k\alpha]}$, for $h\ne k \in \{1,\dots,s-1\}$. Since $s$ is prime, this means that, for each $i$, $n_i$ becomes the count associated to a different root of the unity exactly once for all the powers.
\end{itemize}
Therefore, the sum of the aberrations of the $s$ terms above rewrites as:
\begin{equation*}
a_\alpha + a_{[2\alpha]} + \cdots + a_{[(s-1)\alpha]}  = \sum_{\pi(n_{0},\dots, n_{s-1})} \frac 1 {n^2} \left( \sum_{i=0}^{s-1} n_s^2 + \frac 1 {n^2} \sum_{k=1}^{s-1}  \cos\left(\frac{2\pi}{s} k \right) \sum_{i=0}^{s-1} n_{i} n_{[i-k]} \right) \, ,
\end{equation*}
where the permutations $\pi$ range over the $s$ permutations defined above.
As in the proof of Proposition \ref{prop:mean_ab}, the first addendum is permutation-invariant and therefore it suffices to rewrite the second addendum. Now notice that in the sum
\begin{equation*}
 \sum_{\pi(n_{0},\dots, n_{s-1})} \sum_{i=0}^{s-1} n_{i} n_{[i-k]}
\end{equation*}
for all $k \ne 1$ each monomial $n_in_j$ with $i \ne j$ appears exactly once, so that
\begin{equation*}
a_\alpha + a_{[2\alpha]} + \cdots + a_{[(s-1)\alpha]}  = (s-1) \overline a_{\alpha} \, .
\end{equation*}
This completes the proof.
\end{proof}

\begin{remark}
We emphasize again that the advantage of using mean aberrations in place of the classical aberration formula is the computational ease. In fact, it is enough to compute the $s^m$ vectors of counts, one of each term $x^\alpha$, $\alpha \in L$, and then use the terse formula in Equation \eqref{eq:mean_ab}. Moreover, according to the previous discussion, the $s^m$ mean aberrations are equal $s$ by $s$ and this reduces further the computational cost.
\end{remark}

\begin{remark}
In the non prime case the GWLP is not the sum of the mean aberrations, as shown in the following counter-example. Consider a trivial example, namely a design with only one factor $X$ with four levels and, on a fraction with 6 runs $n_1=(1,2,1,2)$. Then $X^{2}$ has 2 levels with $n_2=(2,4)$, and $X^{3}$ has 4 levels with $n_{3}=(1,2,1,2)$. We have:
\begin{equation*}
a_1= a_{3}=0, \ a_{2}=1/9, \qquad \overline a_1= \overline a_{3}=1/27, \ \overline a_{2}=1/9 \ .
\end{equation*}
Then  $A_1 = \sum_{i=0}^3 a_{i}= 1/9$, while the sum of the mean aberrations is $5/27$.
\end{remark}

\section{Case studies} \label{sec:choice}

In this section we present some results concerning the use of mean aberrations to distinguish among different fractions. We mainly focus on the most interesting situation where fractions have the same GWLP. We use some non-isomorphic orthogonal arrays extracted from the complete series of non-isomorphic orthogonal arrays, \cite{schoen2010complete}. We consider examples of orthogonal arrays of strength 2, so that ${A_1({\mathcal F})}= {A_2({\mathcal F})}= 0$ in all cases, and the first interesting aberrations correspond to the terms of order 3.

\subsection{$2$-level orthogonal arrays, $OA(16; 2; 2^{10})$}

There are six non-isomorphic orthogonal arrays of strength 2 with ten 2-level factors and 16 runs. We denote these arrays by $\fraction_i , i=1,\ldots,6$. They have the same GWLP:
\begin{equation*}
(0,0,8,18,16,8,8,5,0,0) \, .
\end{equation*}
For each orthogonal array we compute the $10$ mean aberrations of order 3 and the $210$ aberrations of order 4. The mean aberrations corresponding to the interaction terms of order 3 can be classified as in Table \ref{tab_2_10_3}.

\begin{table}[h]
\caption{Aberrations of the terms of order 3 for the $OA(16; 2; 2^{10})$ example.}
\label{tab_2_10_3}\par
\vskip .2cm
\begin{center}
\begin{tabular}{c|rrrrrr}
$\overline{a}_\alpha$ & $\fraction_1$ & $\fraction_2$ & $\fraction_3$ & $\fraction_4$ & $\fraction_5$ & $\fraction_6$ \\
\hline
0	& 112 & 100 &	100 &	88	& 88	& 88 \\
0.25	& 0 & 	16 & 	16 &	32	& 32 &	32	\\
1	& 8	& 4	& 4	& 0	& 0 &	0	\\
\end{tabular}
\end{center}
\end{table}

Table \ref{tab_2_10_3} shows that the six orthogonal arrays can be clustered into three groups: ${\fraction_1}$, $\{\fraction_2, \fraction_3\}$, and $\{\fraction_4, \fraction_5, \fraction_6\}$. The mean aberrations corresponding to the interaction terms of order 4 can be classified as in Table \ref{tab_2_10_4}. Here the six fractions are now six different distributions. Moreover, one can see that $\fraction_1$ contains a regular fraction with $8$ defining words of order $3$ and $18$ words of order $4$. Finally, we remark that the two values of the GWLP $A_3({\mathcal F}_i)=8$ and $A_4({\mathcal F}_i)=18$ may be recovered from the mean aberrations via a simple weighted sum.

\begin{table}[h]
\caption{Aberrations of the terms of order 4 for the $OA(16; 2; 2^{10})$ example.}
\label{tab_2_10_4}\par
\vskip .2cm
\begin{center}
\begin{tabular}{c|rrrrrr}
$\overline{a}_\alpha$ & $\fraction_1$ & $\fraction_2$ & $\fraction_3$ & $\fraction_4$ & $\fraction_5$ & $\fraction_6$ \\
\hline
0	& 192	& 168	& 180	& 192	& 168	& 180 \\
0.25 & 	0	& 32	& 16	& 0	& 32	& 16 \\
1	& 18 & 	10 & 	14	& 18	& 10 &	14 \\
\end{tabular}
\end{center}
\end{table}

\subsection{$3$-level orthogonal arrays, $OA(18; 2; 3^{7})$}

There are three non-isomorphic orthogonal arrays of strength 2 with seven 3-level factors and 18 runs. We denote these arrays by $\fraction_1$, $\fraction_2$ and $\fraction_3$. They have the same GWLP:
\begin{equation*}
(0,0,22,34.5,27,31,6) \, .
\end{equation*}
For each orthogonal array we compute all the $278$ values of the mean aberrations to the interaction terms of order 3. They can be classified as in Table \ref{tab_3_7}. In this case the distribution of the mean aberrations corresponding to the interaction terms of order 3 is sufficient to distinguish the arrays.

\begin{table}[h]
\caption{Aberrations of the terms of order 3 for the $OA(18; 2; 3^{7})$ example.}
\label{tab_3_7}\par
\vskip .2cm
\begin{center}
\begin{tabular}{c|rrr}
$\overline{a}_\alpha$ & $\fraction_1$ & $\fraction_2$ & $\fraction_3$ \\
\hline
    0 &   134 &    198 &    102 \\
    $0.08\overline{3}$ &     96 &      0 &    144 \\
    0.25 &     48 &     80 &     32
\end{tabular}
\end{center}
\end{table}

\subsection{$5$-level orthogonal arrays, $OA(25; 2; 5^{3})$}

There are two non-isomorphic orthogonal arrays of strength 2 with three 5-level factors and 25 runs. We denote these arrays by $\fraction_1$ and $\fraction_2$. They have the same GWLP:
\begin{equation*}
(0,0,4) \, .
\end{equation*}
For each orthogonal array we compute the $64$ values of the mean aberrations corresponding to the interaction terms of order 3. They can be classified as in Table \ref{tab_5_3}. Also in this case the distribution of the mean aberrations corresponding to the interaction terms of order 3 is sufficient to distinguish the arrays.

\begin{table}[h]
\caption{Aberrations of the terms of order 3 for the $OA(25; 2; 5^{3})$ example.}
\label{tab_5_3}\par
\vskip .2cm
\begin{center}
\begin{tabular}{c|rr}
$\overline{a}_\alpha$ & $\fraction_1$ & $\fraction_2$  \\
\hline
0	 & 12	& 60 \\	
0.04 & 	16	& 0	\\
0.06 & 32	& 0	\\
0.36 &	4	& 0	\\
1	& 0	& 4
\end{tabular}
\end{center}
\end{table}

%NOn credo serva l'ulteriore esempio
%\begin{table}
%\begin{tabular}{c|rrrrrrrrrrrr}
%$\overline{a}_\alpha$ & $\fraction_1$ & $\fraction_2$ & $\fraction_3$ & $\fraction_4$ & $\fraction_5$ & $\fraction_6$ & $\fraction_7$ & $\fraction_8$ & $\fraction_9$ & $\fraction_{10}$ & $\fraction_{11}$ & $\fraction_{12}$ \\
%\hline
%0 & 24 & 18 & 24 & 16 & 8 & 18 & 10 & 22 & 0 & 6 & 12 & 24 \\
%$0.08\overline{3}$ & 0 & 0 & 0 & 0 & 16 & 0 & 16 & 0 & 32 & 24 & 16 & 0 \\
%0.25 & 0 & 12 & 6 & 16 & 8 & 14 & 6 & 10 & 0 & 2 & 4 & 8 \\
%1 & 8 & 2 & 2 & 0 & 0 & 0 & 0 & 0 & 0 & 0 & 0 & 0 \\
%\end{tabular}
%\end{table}

\section{Future directions} \label{sec:future}

The theory presented in this paper and the examples discussed in the previous section show the relevance of the notion of \emph{mean aberration} to discriminate between designs with the same GWLP, but several problems are still open. We mention here only a couple of directions for future work. First, it should be interesting to generalize the notion of {\emph mean aberration} in order to apply it in the general case, namely for factors with number of levels not prime, and for asymmetric designs. Second, we deem important to explore the connections between the geometric structure of the design points of the fraction and its mean aberration. In fact, some results in this direction have already been achieved, and they are presented in \cite{fontanaetal:14b}, \cite{fontanaetal:14c}, and \cite{fontanaetal:15}. In those papers, saturated fractions and $D$-optimal saturated fractions are described in terms of the circuit basis, a combinatorial object computed from the model matrix. Although aberration and GWLP are defined in a model-free framework, nevertheless we think that the study of the geometry of the fractions will yield new interesting results on aberration, and new insights in design selection.

\vskip .65cm
\noindent
Department of Mathematical Science, Politecnico di Torino, Corso Duca degli Abruzzi 24, 10129 Torino, Italy
\vskip 2pt
\noindent
E-mail: roberto.fontana@polito.it
\vskip 2pt

\noindent
Department of Sciences and Technological Innovation, Universit\`a del Piemonte Orientale, Viale Teresa Michel 11, 15121 Alessandria, Italy
\vskip 2pt
\noindent
E-mail: fabio.rapallo@uniupo.it
\vskip 2pt

\noindent
Department of Mathematics, Universit\`a di Genova, Via Dodecaneso 35, 16146 Genova, Italy
\vskip 2pt
\noindent
E-mail: rogantin@dima.unige.it

% \vskip .3cm
%\centerline{(Received ???? 20??; accepted ???? 20??)}\par


\begin{thebibliography}{}

\bibitem[\protect\citeauthoryear{Abbott, Bigatti, and Lagorio}{Abbott
  et~al.}{}]{CoCoA-5}
Abbott, J., A.~Bigatti, and G.~Lagorio.
\newblock {CoCoA-5}: a system for doing {C}omputations in {C}ommutative
  {A}lgebra.
\newblock Available at \texttt{http://cocoa.dima.unige.it}.

\bibitem[\protect\citeauthoryear{Chen and Cheng}{Chen and
  Cheng}{2012}]{chen|cheng:12}
Chen, H.~H. and C.-S. Cheng (2012).
\newblock Minimum aberration and related criteria for fractional factorial
  designs.
\newblock In K.~Hinkelmann (Ed.), {\em Design and analysis of experiments.
  {V}olume 3}, Wiley Ser. Probab. Stat., pp.\  299--329. Wiley, Hoboken, NJ.

\bibitem[\protect\citeauthoryear{Cheng and Ye}{Cheng and
  Ye}{2004}]{cheng|ye:04}
Cheng, S.-W. and K.~Q. Ye (2004).
\newblock Geometric isomorphism and minimum aberration for factorial designs
  with quantitative factors.
\newblock {\em Ann. Statist.\/}~{\em 32\/}(5), 2168--2185.

\bibitem[\protect\citeauthoryear{Fontana and Pistone}{Fontana and
  Pistone}{2013}]{fontana|pistone:13}
Fontana, R. and G.~Pistone (2013).
\newblock Algebraic generation of orthogonal fractional factorial designs.
\newblock In N.~Torelli, F.~Pesarin, and A.~Bar-Hen (Eds.), {\em Advances in
  theoretical and applied statistics}, Stud. Theor. Appl. Stat. Sel. Papers
  Stat. Soc., pp.\  61--71. Springer, Heidelberg.

\bibitem[\protect\citeauthoryear{Fontana, Pistone, and Rogantin}{Fontana
  et~al.}{2000}]{fontana|pistone|rogantin:00}
Fontana, R., G.~Pistone, and M.~P. Rogantin (2000).
\newblock Classification of two-level factorial fractions.
\newblock {\em J. Statist. Plann. Inference\/}~{\em 87\/}(1), 149--172.

\bibitem[\protect\citeauthoryear{Fontana, Rapallo, and Rogantin}{Fontana
  et~al.}{2014a}]{fontanaetal:14b}
Fontana, R., F.~Rapallo, and M.~P. Rogantin (2014a).
\newblock A characterization of saturated designs for factorial experiments.
\newblock {\em J. Statist. Plann. Inference\/}~{\em 147}, 204--211.

\bibitem[\protect\citeauthoryear{Fontana, Rapallo, and Rogantin}{Fontana
  et~al.}{2014b}]{fontanaetal:14c}
Fontana, R., F.~Rapallo, and M.~P. Rogantin (2014b).
\newblock $d$-optimal saturated designs: A simulation study.
\newblock In V.~Melas, S.~Mignani, P.~Monari, and L.~Salmaso (Eds.), {\em
  Topics in Statistical Simulation}, Springer Proceedings in Mathematics {\&}
  Statistics, pp.\  183--190. Springer, New York.

\bibitem[\protect\citeauthoryear{Fontana, Rapallo, and Rogantin}{Fontana
  et~al.}{2014c}]{fontanaetal:14a}
Fontana, R., F.~Rapallo, and M.~P. Rogantin (2014c).
\newblock Two factor saturated designs: Cycles, gini index and state polytopes.
\newblock {\em J. Stat. Theory Pract.\/}~{\em 8\/}(1), 66--82.

\bibitem[\protect\citeauthoryear{Fontana, Rapallo, and Rogantin}{Fontana
  et~al.}{2015}]{fontanaetal:15}
Fontana, R., F.~Rapallo, and M.~P. Rogantin (2015).
\newblock Aberration of three-level factorial designs and the gini index.
\newblock In C.~Crocetta (Ed.), {\em Statistics and Demography: the Legacy of
  Corrado Gini}, pp.\  a3384.

\bibitem[\protect\citeauthoryear{Gr\"omping and Xu}{Gr\"omping and
  Xu}{2014}]{gromping|xu:14}
Gr\"omping, U. and H.~Xu (2014).
\newblock Generalized resolution for orthogonal arrays.
\newblock {\em Ann. Statist.\/}~{\em 42\/}(3), 918--939.

\bibitem[\protect\citeauthoryear{Li, Lin, and Ye}{Li
  et~al.}{2003}]{li|lin|ye:03}
Li, W., D.~K.~J. Lin, and K.~Q. Ye (2003).
\newblock Optimal foldover plans for two-level nonregular orthogonal designs.
\newblock {\em Technometrics\/}~{\em 45\/}(4), 347--351.

\bibitem[\protect\citeauthoryear{Mukerjee and Wu}{Mukerjee and
  Wu}{2006}]{mukerjee|wu:06}
Mukerjee, R. and C.~F.~J. Wu (2006).
\newblock {\em A modern theory of factorial designs}.
\newblock Springer Series in Statistics. Springer, New York.

\bibitem[\protect\citeauthoryear{Pistone and Rogantin}{Pistone and
  Rogantin}{2008}]{pistone|rogantin:08}
Pistone, G. and M.~P. Rogantin (2008).
\newblock Indicator function and complex coding for mixed fractional factorial
  designs.
\newblock {\em J. Statist. Plann. Inference\/}~{\em 138\/}(3), 787--802.

\bibitem[\protect\citeauthoryear{Qin and Ai}{Qin and Ai}{2007}]{qin|ai:07}
Qin, H. and M.~Ai (2007).
\newblock A note on the connection between uniformity and generalized minimum
  aberration.
\newblock {\em Statist. Papers\/}~{\em 48\/}(3), 491--502.

\bibitem[\protect\citeauthoryear{Qin and Fang}{Qin and
  Fang}{2004}]{qin|fang:04}
Qin, H. and K.-T. Fang (2004).
\newblock Discrete discrepancy in factorial designs.
\newblock {\em Metrika\/}~{\em 60\/}(1), 59--72.

\bibitem[\protect\citeauthoryear{Schoen, Eendebak, and Nguyen}{Schoen
  et~al.}{2010}]{schoen2010complete}
Schoen, E.~D., P.~T. Eendebak, and M.~V. Nguyen (2010).
\newblock Complete enumeration of pure-level and mixed-level orthogonal arrays.
\newblock {\em Journal of Combinatorial Designs\/}~{\em 18\/}(2), 123--140.

\bibitem[\protect\citeauthoryear{Suen, Chen, and Wu}{Suen
  et~al.}{1997}]{suen|chen|wu:97}
Suen, C.-Y., H.~Chen, and C.~F.~J. Wu (1997).
\newblock Some identities on {$q^{n-m}$} designs with application to minimum
  aberration designs.
\newblock {\em Ann. Statist.\/}~{\em 25\/}(3), 1176--1188.

\bibitem[\protect\citeauthoryear{Xu and Wu}{Xu and Wu}{2001}]{xu|wu:01}
Xu, H. and C.~F.~J. Wu (2001).
\newblock Generalized minimum aberration for asymmetrical fractional factorial
  designs.
\newblock {\em Ann. Statist.\/}~{\em 29\/}(4), 1066--1077.

\end{thebibliography}
\end{document}